\newlength{\abovebis} 
\newlength{\belowbis} 
\newlength{\aboveshortbis} 
\newlength{\belowshortbis} 
\everydisplay\expandafter{%
  \the\everydisplay 
  \advance\abovedisplayskip\abovebis 
  \advance\belowdisplayskip\belowbis 
  \advance\abovedisplayshortskip\aboveshortbis 
  \advance\belowdisplayshortskip\belowshortbis 
}
\def\R{\mathbb{R}}
\def\N{\mathbb{N}}
\def\C{\mathbb{C}}
\def\Ree{\mathrm{Re}}
\def\Imm{\mathrm{Im}}
\def\Div{\mathrm{div}}
\def\supp{\mathrm{supp}\,}
\theoremstyle{plain}
\newtheorem{lem}{Lemma}[section]
\newtheorem{theo}[lem]{Theorem}
\newtheorem{prob}{Problem}
\newtheorem{prop}[lem]{Proposition}
\theoremstyle{definition}
\newtheorem*{rem}{Remark}
\numberwithin{equation}{section}
\begin{document}
\title[New global stability in 2D]{New global stability estimates for the Calder\'on problem in  two dimensions}
\author{Matteo Santacesaria}
\address[M. Santacesaria]{Centre de Mathématiques Appliquées, \'Ecole Polytechnique, 91128, Palaiseau, France}
\email{santacesaria@cmap.polytechnique.fr}
\subjclass{35R30; 35J15}
\keywords{Calder\'on problem, electrical impedance tomography, Schr\"odinger equation, global stability in 2D, generalised analytic functions}
\begin{abstract}
We prove a new global stability estimate for the Gel'fand-Calder\'on inverse problem on a two-dimensional bounded domain. Specifically, the inverse boundary value problem for the equation $-\Delta \psi + v\, \psi = 0$ on $D$ is analysed, where $v$ is a smooth real-valued potential of conductivity type defined on a bounded planar domain $D$. The main feature of this estimate is that it shows that the more a potential is smooth, the more its reconstruction is stable. Furthermore, the stability is proven to depend exponentially on the smoothness, in a sense to be made precise. The same techniques yield a similar estimate for the Calder\'on problem for the  electrical impedance tomography.
\end{abstract}

\maketitle

\section{Introduction}
Let $D \subset \R^2$ be a bounded domain equipped with a potential given by a function $v \in L^{\infty}(D)$.  The corresponding Dirichlet-to-Neumann map is the operator $\Phi: H^{1/2}(\partial D) \to H^{-1/2}(\partial D)$, defined by
\begin{equation}
\Phi(f) = \left. \frac{\partial u}{\partial \nu}\right|_{\partial D},
\end{equation}
where $f \in H^{1/2}(\partial D)$, $\nu$ is the outer normal of $\partial D$, and $u$ is the $H^1(D)$-solution of the Dirichlet problem
\begin{equation}
(-\Delta + v)u = 0 \; \textrm{on} \; D, \; \; \; u|_{\partial D}=f.
\label{schr}
\end{equation}
Here we have assumed that
\begin{equation} \label{direig}
0 \textrm{ is not a Dirichlet eigenvalue for the operator } - \Delta + v \textrm{ in } D.
\end{equation}

The following inverse boundary value problem arises from this construction:
\begin{prob}
Given $\Phi$, find $v$ on $D$.
\end{prob}

This problem can be considered as the Gel’fand inverse boundary value problem for the Schr\"odinger equation at zero energy (see \cite{G}, \cite{N1}) as well as a generalization of the Calder\'on problem for the electrical impedance tomography (see \cite{C}, \cite{N1}), in two dimensions.

It is convenient to recall how the above problem generalises the inverse conductivity problem proposed by Calder\'on. In the latter, $D$ is a body equipped with an isotropic conductivity $\sigma(x) \in L^{\infty}(D)$ (with $\sigma \geq \sigma_{\textrm{min}} > 0$),
\begin{align} \label{vcond}
&v(x) = \frac{\Delta \sigma^{1/2}(x)}{\sigma^{1/2}(x)}, \qquad x \in D,\\ \label{opchange}
&\Phi = \sigma^{-1/2}\left( \Lambda \sigma^{-1/2}  + \frac{\partial \sigma^{1/2}}{\partial \nu}\right),
\end{align}
where $\sigma^{-1/2}$, ${\partial \sigma^{1/2}}/{\partial \nu}$ in \eqref{opchange} denote the multiplication operators by the functions $\sigma^{-1/2}|_{\partial D}$, ${\partial \sigma^{1/2}}/{\partial \nu}|_{\partial D}$, respectively and $\Lambda$ is the voltage-to-current map on $\partial D$, defined as
\begin{equation}
\Lambda f = \left. \sigma \frac{\partial u}{\partial \nu}\right|_{\partial D},
\end{equation}
where $f \in H^{1/2}(\partial D)$, $\nu$ is the outer normal of $\partial D$, and $u$ is the $H^1(D)$-solution of the Dirichlet problem
\begin{equation} \label{conser}
\Div(\sigma \nabla u) = 0 \; \textrm{on} \; D, \; \; \; u|_{\partial D}=f.
\end{equation}
Indeed, the substitution $u = \tilde  u \sigma^{-1/2}$ in \eqref{conser} yields $(-\Delta + v)\tilde u = 0$ in $D$ with $v$ given by \eqref{vcond}.
The following problem is called the Calder\'on problem:
\begin{prob}
Given $\Lambda$, find $\sigma$ on D. 
\end{prob}
We remark that Problems 1 and 2 are not overdetermined, in the sense that we consider the reconstruction of a real-valued function of two variables from real-valued inverse problem data dependent on two variables. In addition, the history of inverse problems for the two-dimensional Schr\"odinger equation at fixed energy goes back to \cite{DKN}.\smallskip

There are several questions to be answered in these inverse problems: to prove the uniqueness of their solutions (e.g. the injectivity of the map $v \to \Phi$ for Problem 1), the reconstruction and the stability of the inverse map.

In this paper we study interior stability estimates for the two problems. Let us consider, for instance, Problem 1 with a potential of conductivity type. We want to prove that given two Dirichlet-to-Neumann operators, respectively $\Phi_1$ and $\Phi_2$, corresponding to potentials, respectively $v_1$ and $v_2$ on $D$, we have that
\begin{equation} \nonumber
\|v_1 - v_2\|_{L^{\infty}(D)} \leq \omega \left( \| \Phi_1 - \Phi_2\|_{H^{1/2} \to H^{-1/2}}\right),
\end{equation}
where the function $\omega(t) \to 0$ as fast as possible as $t \to 0$. For Problem 2 similar estimates are considered.

There is a wide literature on the Gel'fand-Calder\'on inverse problem. In the case of complex-valued potentials the global injectivity of the map $v \to \Phi$ was firstly proved in \cite{N1} for $D \subset \R^d$ with $d \geq 3$ and in \cite{B} for $d = 2$ with $v \in L^p$: in particular, these results were obtained by the use of global reconstructions developed in the same papers. A global stability estimate for Problem 1 and 2 for $d \geq 3$ was first found by Alessandrini in \cite{A}; this result was recently improved in \cite{N2}. In the two-dimensional case the first global stability estimate for Problem 1 was given in \cite{NS}.

Global results for Problem 2 in the two dimensional case have been found much earlier than for Problem 1. In particular, global uniqueness was first proved in \cite{Na} for conductivities in the $W^{2,p}(D)$ class ($p >1$) and after in \cite{AP} for $L^{\infty}$ conductivities. The first global stability result was given in \cite{L}, where a logarithmic estimate is obtained for conductivities with two continuous derivatives. This result was improved in \cite{BFR}, where the same kind of estimate is obtained for H\"older continuous conductivities.

The research line delineated above is devoted to prove stability estimates for the least regular potentials/conductivities possible. Here, instead, we focus on the opposite situation, i.e. smooth potentials/conductivities, and try to answer another question: how the stability estimates vary with respect to the smoothness of the potentials/conductivities.

The results, detailed below, also constitute a progress for the case of non-smooth potentials: they indicate stability dependence of the smooth part of a singular potential with respect to boundary value data.\smallskip

We will assume for simplicity that
\begin{equation} \label{cv1}
\begin{split}
&D \text{ is an open bounded domain in } \R^2, \qquad \partial D \in C^2, \\
&v \in W^{m,1}(\R^2) \text{ for some } m > 2, \qquad \mathrm{supp} \; v \subset D,
\end{split}
\end{equation} 
where
\begin{align}
&W^{m,1}(\R^2) = \{ v \; : \; \partial^J v \in L^1(\R^2),\; |J| \leq m \}, \qquad m \in \N \cup \{0\},\\ \nonumber
&J \in (\N \cup \{0\})^2, \qquad |J| = J_1+J_2, \qquad \partial^J v(x) = \frac{\partial^{|J|} v(x)}{\partial x_1^{J_1} \partial x_2^{J_2}}.
\end{align}
Let
\begin{equation} \nonumber
\|v\|_{m,1} = \max_{|J| \leq m} \| \partial^J v \|_{L^1(\R^2)}.
\end{equation}
The last (strong) hypothesis is that we will consider only potentials of conductivity type, i.e.
\begin{equation} \label{cv2}
v = \frac{\Delta \sigma^{1/2}}{\sigma^{1/2}}, \text{ for some } \sigma \in L^{\infty}(D), \; \text{with } \sigma \geq \sigma_{\textrm{min}} > 0.
\end{equation}
The main results are the following.

\begin{theo} \label{maintheo}
Let the conditions \eqref{direig}, \eqref{cv1}, \eqref{cv2} hold for the potentials $v_1, v_2$, where $D$ is fixed, and let $\Phi_1$ , $\Phi_2$ be the corresponding Dirichlet-to-Neumann operators. Let $\|v_j\|_{m,1} \leq N$, $j=1,2$, for some $N >0$. Then there exists a constant $C = C(D, N,m)$ such that
\begin{equation}
\| v_2 - v_1\|_{L^{\infty}(D)} \leq C (\log(3 + \| \Phi_2 - \Phi_1 \|^{-1} ))^{-\alpha},
\end{equation}
where $\alpha= m-2$ and $\| \Phi_2 - \Phi_1 \| = \| \Phi_2 - \Phi_1 \|_{H^{1/2} \to H^{-1/2}}$.
\end{theo}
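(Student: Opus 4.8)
The plan is to reduce the inverse problem to the $\bar\partial$-method for Faddeev-type exponentially growing solutions and then to trade boundary information against interior regularity by a cut-off-and-optimise argument. First, for each potential $v_j$ ($j=1,2$) and each spectral parameter $\lambda \in \C \setminus \{0\}$, I would construct the complex geometric optics solutions $\psi_j(\cdot,\lambda)$ of $(-\Delta + v_j)\psi_j = 0$ normalised so that $\mu_j(z,\lambda) = e^{-i\lambda z}\psi_j(z,\lambda) \to 1$ as $|\lambda| \to \infty$. Here the conductivity-type hypothesis \eqref{cv2} is essential: it guarantees that these solutions exist for \emph{every} $\lambda \neq 0$ (absence of exceptional points) and that $\mu_j$ is globally well behaved, so that the construction is genuinely global rather than generic. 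Using the bound $\|v_j\|_{m,1} \le N$ I would record quantitative, $\lambda$-uniform estimates on $\mu_j$ and on its trace on $\partial D$.

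Second, I would introduce the associated scattering data. The restriction of $\psi_j$ to $\partial D$ is determined by $\Phi_j$, so a generalised scattering amplitude $h_j(\lambda)$ is defined as a boundary pairing, and an Alessandrini-type identity expresses the difference in the form
\[
h_2(\lambda) - h_1(\lambda) = \int_{\partial D} \psi_1(x,\lambda)\,\bigl[(\Phi_2 - \Phi_1)\psi_2(\cdot,\lambda)\bigr](x)\, dx .
\]
Combined with the $\lambda$-uniform boundary bounds from the first step, this yields
\[
|h_2(\lambda) - h_1(\lambda)| \le c(N,D)\, e^{c|\lambda|}\, \|\Phi_2 - \Phi_1\|, \qquad |\lambda| \le \rho .
\]
On the other hand, the functions $\mu_j$ satisfy a $\bar\partial$-equation in the variable $\lambda$ whose coefficient is the $\bar\partial$-scattering data $\mathbf{r}_j(\lambda)$, together with a reconstruction formula recovering $v_j$ from $\mathbf{r}_j$ (equivalently, from the large-$\lambda$ asymptotics of $\mu_j$).

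Third, I would write $v_2 - v_1$ as an integral over $\lambda \in \C$ of the difference of scattering data weighted by explicit kernels and the $\mu_j$, and split it at $|\lambda| = \rho$. On $\{|\lambda| \le \rho\}$ the low-frequency part is controlled by the bound above, contributing at most $c(N,D)\, e^{c\rho}\|\Phi_2 - \Phi_1\|$. On $\{|\lambda| > \rho\}$ the high-frequency tail is controlled by the decay of the scattering data inherited from smoothness: the $W^{m,1}$ bound yields $|\mathbf{r}_j(\lambda)| \le c(N,m)\,(1+|\lambda|)^{-m}$, so that
\[
\int_{|\lambda| > \rho} |\mathbf{r}_j(\lambda)|\, d\Ree\lambda\, d\Imm\lambda \;\le\; c(N,m)\int_\rho^\infty r^{1-m}\, dr \;=\; c'(N,m)\,\rho^{-(m-2)} ,
\]
which converges precisely because $m > 2$. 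Balancing the two contributions by choosing $\rho \sim \log(3 + \|\Phi_2-\Phi_1\|^{-1})$ makes the exponential factor $e^{c\rho}\|\Phi_2-\Phi_1\|$ subordinate and leaves the tail term $\rho^{-(m-2)}$ dominant, which is exactly the asserted rate with $\alpha = m-2$.

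The main obstacle is this high-frequency estimate: I expect the delicate part to be proving the required decay of $\mathbf{r}_j(\lambda)$ and the uniform-in-$\lambda$ control of $\mu_j$ \emph{purely} in terms of $\|v_j\|_{m,1}$, since this is where the full strength of the Faddeev / generalised-analytic-function machinery—solvability of the $\bar\partial$-equation without exceptional points and the associated a priori estimates, made quantitative in $N$ and $m$—must be deployed. The conductivity-type assumption \eqref{cv2} is what secures global solvability, while tracking the exact power of $|\lambda|$ gained from each derivative of $v_j$ is what produces the sharp exponent $m-2$.
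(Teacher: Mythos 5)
Your skeleton coincides with the paper's own strategy: Faddeev solutions free of exceptional points thanks to \eqref{cv2}, the Alessandrini-type identity giving $|h_2(\lambda)-h_1(\lambda)| \leq c(D,N)e^{2l|\lambda|}\|\Phi_2-\Phi_1\|$ (Lemma \ref{lemdifh}), the high-frequency decay $|h_j(\lambda)| \leq c \|\hat v_j\|_m (1+4|\lambda|^2)^{-m/2}$ inherited from the $W^{m,1}$ bound (Lemma \ref{lemh}, via \cite{N4}), and the logarithmic choice of the cut radius to balance $e^{2l\rho}\delta$ against the tail (Propositions \ref{prophdir} and \ref{prophdir2}). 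Your tail computation $\int_\rho^\infty r^{1-m}\,dr \sim \rho^{2-m}$ is exactly where the exponent $\alpha = m-2$ arises in the paper: it is the $L^1$ norm of $h_2-h_1$, i.e.\ the case $p=1$ of the rate $m-2/p$ in Proposition \ref{prophdir2}.

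The genuine gap is in your third step. There is no representation of $v_2-v_1$ as a pairing of the data difference against fixed kernels: the reconstruction formula \eqref{expl} is bilinear in $(h,\mu)$ and, moreover, involves $\partial_z\mu$, because $v = 4i\,\partial\mu_{-1}/\partial\bar z$ forces a $z$-derivative onto the solution. Differencing \eqref{expl} therefore produces, besides the terms you account for, the terms $h_1(\lambda)\bigl(\overline{\mu_2(z,\lambda)}-\overline{\mu_1(z,\lambda)}\bigr)$ and $\bigl(h_1(\lambda)/\bar\lambda\bigr)\overline{\bigl(\partial_z\mu_2(z,\lambda)-\partial_z\mu_1(z,\lambda)\bigr)}$, and these are not handled by ``uniform-in-$\lambda$ control of $\mu_j$'': what is required is \emph{stability} of the map $h \mapsto (\mu,\partial_z\mu)$, i.e.\ bounds on the differences in $L^{\tilde s}_\lambda$, uniform in $z$, by $\|(h_2-h_1)/\bar\lambda\|_{L^s}$ and $\|h_2-h_1\|_{L^s}$. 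This is the content of the paper's Section \ref{sechv} (Lemma \ref{lemestmu}): the difference $\nu = \mu_2-\mu_1$ satisfies a nonhomogeneous $\bar\partial$-equation in $\lambda$ with coefficient $h_1/\bar\lambda$ acting on $\overline{\nu}$ and source $\bigl((h_2-h_1)/\bar\lambda\bigr)\overline{\mu_2}$, to which the a priori estimate for generalized analytic functions (Lemma 2.6 of \cite{BBR}, restated as Lemma \ref{lemtech}) is applied; for the derivatives one cannot argue directly, since the equations for $\mu_z$ and $\mu_{\bar z}$ are coupled through complex conjugation, and the paper decouples them by passing to the combinations $\tau_\pm = (\partial_z \pm \partial_{\bar z})(\mu_2-\mu_1)$, each of which satisfies a scalar $\bar\partial$-equation amenable to Lemma \ref{lemtech}. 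Your closing paragraph points at the right machinery but locates the difficulty in estimating each $\mu_j$ and the decay of each $h_j$ separately; the actual crux, without which your balancing argument controls only part of the terms, is this difference estimate. A minor further point: the weight $1/\bar\lambda$ in the kernels is singular at $\lambda=0$, so your single splitting at $|\lambda|=\rho$ must be supplemented near the origin; for Theorem \ref{maintheo} one works in $L^s$ with $1<s<2$, where $|\lambda|^{-1}$ is locally $s$-integrable, but the paper's three-region splitting with a shrinking inner radius, using the vanishing $|h(\lambda)| \leq c|\lambda|^\varepsilon$ of \eqref{esthnear} (itself a consequence of the conductivity-type hypothesis), is what keeps the estimates sharp and extends them to exponents $p'>2$ as needed for Theorem \ref{coro}.
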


\begin{theo} \label{coro}
Let $\sigma_1, \sigma_2$ be two isotropic conductivities such that $\Delta( \sigma_j^{1/2}) / \sigma_j^{1/2}$ satisfies conditions \eqref{cv1}, where $D$ is fixed and $0<\sigma_{\min} \leq \sigma_j \leq \sigma_{\max} < +\infty$ for $j=1,2$ and some constants $\sigma_{\min}$ and $\sigma_{\max}$. Let $\Lambda_1$ , $\Lambda_2$ be the corresponding Dirichlet-to-Neumann operators and $\|\Delta( \sigma_j^{1/2}) / \sigma_j^{1/2}\|_{m,1} \leq N$, $j=1,2$, for some $N >0$. We suppose, for simplicity, that $\supp ( \sigma_j -1) \subset D$ for $j=1,2$. Then, for any $\alpha < m$ there exists a constant $C = C(D, N,\sigma_{\min}, \sigma_{\max}, m,\alpha)$ such that
\begin{equation}
\| \sigma_2 - \sigma_1\|_{L^{\infty}(D)} \leq C (\log(3 + \| \Lambda_2 - \Lambda_1 \|^{-1} ))^{-\alpha},
\end{equation}
where $\| \Lambda_2 - \Lambda_1 \| = \| \Lambda_2 - \Lambda_1 \|_{H^{1/2} \to H^{-1/2}}$.
\end{theo}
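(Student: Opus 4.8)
The plan is to reduce Theorem \ref{coro} to Theorem \ref{maintheo} via the change of variables \eqref{vcond}--\eqref{opchange}, and then to upgrade the resulting exponent $m-2$ to any $\alpha<m$ by exploiting that a conductivity is two derivatives smoother than its associated potential. First I would translate the boundary data. Since $\supp(\sigma_j-1)\subset D$ and $D$ is open and bounded, the (compact) support of $\sigma_j-1$ has positive distance from $\partial D$, so $\sigma_j\equiv 1$, and hence $\sigma_j^{1/2}\equiv 1$, on a neighbourhood of $\partial D$. Consequently $\sigma_j^{-1/2}|_{\partial D}=1$ and $\partial\sigma_j^{1/2}/\partial\nu|_{\partial D}=0$, so the multiplication operators in \eqref{opchange} reduce to the identity and the boundary term vanishes, giving $\Phi_j=\Lambda_j$ for $j=1,2$ and therefore $\|\Phi_2-\Phi_1\|=\|\Lambda_2-\Lambda_1\|$. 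The potentials $v_j=\Delta\sigma_j^{1/2}/\sigma_j^{1/2}$ satisfy \eqref{cv1}, \eqref{cv2} by hypothesis (with $\supp v_j\subset D$), while \eqref{direig} holds because $\sigma_j\geq\sigma_{\min}>0$ makes \eqref{conser} uniquely solvable, which is equivalent to $0$ not being a Dirichlet eigenvalue of $-\Delta+v_j$. Thus Theorem \ref{maintheo} applies and yields, with $L:=\log(3+\|\Lambda_2-\Lambda_1\|^{-1})$,
\begin{equation}\nonumber
\|v_2-v_1\|_{L^\infty(D)}\leq C\,L^{-(m-2)}.
\end{equation}

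Next I would pass from $v$ to $\sigma$. Writing $w_j=\sigma_j^{1/2}$, these solve $(-\Delta+v_j)w_j=0$ in $D$ with $w_j|_{\partial D}=1$ and $\sigma_{\min}^{1/2}\leq w_j\leq\sigma_{\max}^{1/2}$. Bootstrapping elliptic regularity in this equation, using $\|v_j\|_{m,1}\leq N$ and $w_j\in L^\infty$, bounds $w_j$ — and hence $\sigma_j=w_j^2$ — in a space roughly two orders smoother than $W^{m,1}$, morally $\sigma_j\in W^{m+2,1}$, uniformly in $N,\sigma_{\min},\sigma_{\max}$; this extra smoothness is the whole point. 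The difference $\delta w=w_2-w_1$ solves $(-\Delta+v_2)\delta w=(v_1-v_2)w_1$ with $\delta w|_{\partial D}=0$, and by \eqref{direig} the Dirichlet resolvent of $-\Delta+v_2$ is bounded, so that $\|\delta w\|\leq C\|v_2-v_1\|$ in suitable norms, while $\delta\sigma=(w_1+w_2)\,\delta w$. Used naively this already gives $\|\sigma_2-\sigma_1\|_{L^\infty(D)}\leq C\,L^{-(m-2)}$.

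The crux — and the main obstacle — is to replace $m-2$ by any $\alpha<m$. This gain cannot come from Theorem \ref{maintheo} as a black box: since $\delta\sigma$ depends on $\delta v$ through a bounded (smoothing) operation, every reasonable norm of $\delta\sigma$ inherits the exponent $m-2$, and interpolating a smallness of exponent $m-2$ against the a priori high-order bound on $\delta\sigma$ only returns exponents $\leq m-2$. Instead I would re-enter the stability argument at the level of $\sigma$, splitting the relevant reconstruction at a frequency $\rho$: the low-frequency part is controlled by the data discrepancy $\|\Lambda_2-\Lambda_1\|$ exactly as in the proof of Theorem \ref{maintheo}, while the high-frequency tail is now estimated by the $W^{m+2,1}$-type a priori bound on $\delta\sigma$, producing a tail of order $\rho^{-m}$ in place of $\rho^{-(m-2)}$. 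Optimising over $\rho\sim\log(3+\|\Lambda_2-\Lambda_1\|^{-1})$ then yields the exponent $m$, the unavoidable $\varepsilon$-loss (hence $\alpha<m$, and the dependence of $C$ on $\alpha$) arising from the two-dimensional Sobolev embedding $H^{1+\varepsilon}\hookrightarrow L^\infty$ needed to convert the frequency estimate back into the stated $L^\infty$ bound. Carrying out this last step rigorously — transferring two derivatives of regularity into two units of logarithmic exponent — is where the real work lies.
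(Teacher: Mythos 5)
Your opening reduction is sound and coincides with the paper's: extending $\sigma_j$ by $1$ outside $D$, the conditions $\sigma_j|_{\partial D}=1$ and $\partial \sigma_j^{1/2}/\partial\nu|_{\partial D}=0$ give $\Phi_j=\Lambda_j$ via \eqref{opchange}, and \eqref{direig} does hold automatically for conductivity-type potentials. But the heart of the theorem --- the exponent $\alpha<m$ in place of $m-2$ --- is precisely the step you leave as ``where the real work lies,'' and the route you sketch for it would not go through as described. The proof of Theorem \ref{maintheo} does not control any ``low-frequency part of $\delta\sigma$'' by the data: what it controls is $h_2-h_1$ on the disc $|\lambda|\leq \beta\log(3+\delta^{-1})$ (Lemma \ref{lemdifh}), and $\lambda$ is the spectral parameter of the Faddeev functions, not a Fourier frequency of $\sigma$; converting between the two would require a Born-type approximation $h\approx \hat v$ with error estimates that appear nowhere in your outline and are not elementary. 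Likewise, the elliptic bootstrap placing $\sigma_j$ in a $W^{m+2,1}$-type class, while plausible, is never used in the paper and cannot by itself break the $m-2$ barrier --- as you yourself observe, interpolating against a priori smoothness of $\delta\sigma$ only returns exponents $\leq m-2$.

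The paper's actual mechanism is different and much shorter: it never passes through $v_2-v_1$ or the reconstruction formula \eqref{expl} at all. It is that formula which costs two units, through the term $\|h_2-h_1\|_{L^1}$, whose stability exponent in Proposition \ref{prophdir2} is $m-2/p$ at $p=1$, i.e.\ $m-2$. Instead, by \eqref{estmunear} one has Nachman's limit $\mu_j(z,\lambda)\to\sigma_j^{1/2}(z)$ as $\lambda\to 0$, whence
\begin{equation} \nonumber
\|\sigma_2-\sigma_1\|_{L^\infty(D)} \leq c(N)\,\|\mu_2(\cdot,0)-\mu_1(\cdot,0)\|_{L^\infty(D)},
\end{equation}
and Lemma \ref{prophv} bounds the right-hand side by $\left\|(h_2-h_1)/\bar\lambda\right\|_{L^p(\C)\cap L^{p'}(\C)}$ alone: the derivative term and the unweighted difference $h_2-h_1$ drop out entirely. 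The extra factor $1/\bar\lambda$ improves the large-$\lambda$ decay by one power, so Proposition \ref{prophdir} yields the logarithmic exponent $m+1-2/p$, which tends to $m$ as $p\to 2^-$; choosing $\max\left(1,\frac{2}{m-\alpha+1}\right)<p<2$ gives any prescribed $\alpha<m$. Note also that the loss forcing $\alpha<m$ strictly comes from the constraint $p<2$ in this choice, not from a Sobolev embedding $H^{1+\varepsilon}\hookrightarrow L^\infty$ as you conjectured. In summary: your reduction $\Phi_j=\Lambda_j$ and your naive $m-2$ bound are correct, but the decisive exponent-gain step is missing from your proposal, and the frequency-splitting-plus-bootstrap strategy you propose in its place is not the mechanism that works here.
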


The main feature of these estimates is that, as $m \to +\infty$, we have $\alpha \to +\infty$. In addition we would like to mention that, under the assumptions of Theorems \ref{maintheo} and \ref{coro}, according to instability estimates of Mandache \cite{M} and Isaev \cite{I}, our results are almost optimal. Note that, in the linear approximation near the zero potential, Theorem \ref{maintheo} (without condition \eqref{cv2}) was proved in \cite{NN}. In dimension $d \geq 3$ a global stability estimate similar to our result (with respect to dependence on smoothness) was proved in \cite{N2}.\smallskip

The proof of Theorem \ref{maintheo} relies on the $\bar \partial$-techniques introduced by Beals--Coifman \cite{BC}, Henkin--R. Novikov \cite{HN}, Grinevich--S. Novikov \cite{GN} and developed by R. Novikov \cite{N1} and Nachman \cite{Na} for solving the Calder\'on problem in two dimensions.

The Novikov--Nachman method starts with the construction of a special family of solutions $\psi(x,\lambda)$ of equation \eqref{schr}, which was originally introduced by Faddeev in \cite{F}. These solutions have an exponential behaviour depending on the complex parameter $\lambda$ and they are constructed via some function $\mu(x,\lambda)$ (see \eqref{defmu}). One of the most important property of $\mu(x,\lambda)$ is that it satisfies a $\bar \partial$-equation with respect to the variable $\lambda$ (see equation \eqref{dbar}), in which appears the so-called Faddeev generalized scattering amplitude $h(\lambda)$ (defined in \eqref{defh}). On the contrary, if one knows $h(\lambda)$ for every $\lambda \in \C$, it is possible to recover $\mu(x,\lambda)$ via this $\bar \partial$-equation. Starting from these arguments we will prove that the map $h(\lambda) \to \mu(z,\lambda)$ satisfies an H\"older condition, uniformly in the space variable $z$. This is done in Section \ref{sechv}. 

Another part of the method relates the scattering amplitude $h(\lambda)$ to the Dirichlet-to-Neumann operator $\Phi$. In the present paper this is done using the Alessandrini identity (see \cite{A}) and an estimate of $h(\lambda)$ for high values of $|\lambda|$ given in \cite{N4}. We find that the map $\Phi \to h$ has logarithmic stability in some natural norm (Proposition \ref{prophdir}). This is explained in Section \ref{secphih}.

The final part of the method for the two problems is quite different. For Problem 2, in order to recover $\sigma(x)$ from $\mu(x,\lambda)$, we use a limit found for the first time in \cite{Na}. Instead, for Problem 1, we use an explicit formula for $v(x)$ which involves the scattering amplitude $h(\lambda)$, $\mu(x,\lambda)$ and its first (complex) derivative with respect to $z = x_1+ix_2$ (see formula \eqref{expl}). The two results are presented in section \ref{secpf} and yield the proofs of Theorems \ref{maintheo} and \ref{coro}.

This work was fulfilled in the framework of researches under the direction of R. G. Novikov.
\section{Preliminaries}
In this section we recall some definitions and properties of the Faddeev functions, the above-mentioned family of solutions of equation \eqref{schr}, which will be used throughout all the paper.
\smallskip

Following \cite{Na}, we fix some $1 < p < 2$ and define $\psi(x,k)$ to be the solution (when it exists unique) of
\begin{equation} \label{equa}
(-\Delta + v) \psi(x,k) = 0 \text{ in } \R^2,
\end{equation}
with $e^{-ixk} \psi(x,k) - 1 \in W^{1,\tilde p}(\R^2)= \{ u \; : \; \partial^{J}u \in L^{\tilde p}(\R^2),\; |J| \leq 1 \}$, where $x=(x_1,x_2) \in\R^2$, $k=(k_1,k_2) \in \mathcal{V} \subset \C^2$,
\begin{align} \label{defV}
\mathcal{V} = \{k \in \C^2 \, : \, k^2=k_1^2+k_2^2 =0 \}
\end{align} and
\begin{equation} \label{deftilde}
\frac{1}{\tilde p} = \frac{1}{p} - \frac 1 2.
\end{equation}
The variety $\mathcal{V}$ can be written as $\{ (\lambda, i\lambda) : \lambda \in \C \} \cup \{ (\lambda, -i\lambda) :  \lambda \in \C \}$. We henceforth denote $\psi(x,(\lambda, i \lambda))$ by $\psi(x,\lambda)$ and observe that, since $v$ is real-valued, uniqueness for \eqref{equa} yields $\psi(x, (-\bar \lambda, i \bar \lambda)) = \overline{\psi(x,(\lambda, i \lambda))} =\overline{\psi(x,\lambda)}$ so that, for reconstruction and stability purpose, it is sufficient to work on the sheet $k= (\lambda, i \lambda)$.

We now identify $\R^2$ with $\C$ and use the coordinates $z= x_1 + i x_2, \; \bar z = x_1 - i x_2$,
\begin{equation} \nonumber
\frac{\partial}{\partial z}=\frac{1}{2}\left( \frac{\partial}{\partial x_1}-i \frac{\partial}{\partial x_2}\right), \quad \frac{\partial}{\partial \bar z}=\frac{1}{2}\left( \frac{\partial}{\partial x_1}+i \frac{\partial}{\partial x_2}\right),
\end{equation}
where $(x_1, x_2) \in \R^2$.

Then we define
\begin{align}\label{defpsilambda}
\psi(z,\lambda) &= \psi(x,\lambda), \\ \label{defmu}
\mu(z,\lambda) &= e^{-iz\lambda} \psi(z,\lambda), \\ \label{defh}
h(\lambda) &= \int_D e^{i \bar z \bar \lambda} v(z) \psi(z,\lambda) d\Ree z \, d \Imm z,
\end{align}
for $z, \lambda \in \C$.

Throughout all the paper $c(\alpha, \beta, \ldots)$ is a positive constant depending on parameters $\alpha, \beta, \ldots$ \smallskip

We now restate some fundamental results about Faddeev functions. In the following statement $\psi_0$ denotes $\sigma^{1/2}$.

\begin{prop}[see \cite{Na}] \label{mainprop}
Let $D \subset \R^2$ be an open bounded domain with $C^2$ boundary, $v \in L^p(\R^2)$, $1 < p <2$, $\mathrm{supp} \, v \subset D$, $\|v\|_{L^p(\R^2)} \leq N$, be such that there exists a real-valued $\psi_0 \in L^{\infty}(\R^2)$ with $v = (\Delta \psi_0) / \psi_0, \; \psi_0(x) \geq c_0 > 0$ and $\psi_0 \equiv 1$ outside $D$. Then, for any $\lambda \in \C$ there is a unique solution $\psi(z,\lambda)$ of \eqref{equa} with $e^{-iz \lambda}\psi(\cdot, \lambda) - 1$ in $L^{\tilde p} \cap L^{\infty}$ ($\tilde p$ is defined in \eqref{deftilde}). Furthermore, $e^{-iz \lambda}\psi(\cdot, \lambda) - 1 \in W^{1,\tilde p}(\R^2)$ and
\begin{equation} \label{estpsi}
\|e^{-iz \lambda}\psi(\cdot, \lambda) - 1\|_{W^{s,\tilde p}} \leq c(p,s) N |\lambda|^{s-1},
\end{equation}
for $0 \leq s \leq 1$ and $\lambda$ sufficiently large.\smallskip

The function $\mu(z,\lambda)$ defined in \eqref{defmu} satisfies the equation
\begin{equation}\label{dbar}
\frac{\partial \mu(z,\lambda)}{\partial \bar \lambda} = \frac{1}{4 \pi \bar \lambda}h(\lambda)e_{-\lambda}(z) \overline{\mu(z,\lambda)}, \qquad z,\lambda \in \C,
\end{equation}
in the $W^{1,\tilde p}$ topology, where $h(\lambda)$ is defined in \eqref{defh} and the function $e_{-\lambda} (z)$ is defined as follows:
\begin{equation}\label{defexp}
e_{\lambda}(z) = e^{i(z\lambda + \bar z \bar \lambda)}.\smallskip
\end{equation}
In addition, the functions $h(\lambda)$ and $\mu(z,\lambda)$ satisfy 
\begin{gather} \label{prophlp}
\left\|\frac{h(\lambda)}{\bar \lambda}\right\|_{L^r(\R^2)}\! \! \! \! \! \! \! \! \leq c(r,  N), \; \text{for all } r \in (\tilde p', \tilde p), \quad \frac{1}{ \tilde p} + \frac{1}{\tilde p'} = 1,\\ \label{normmu}
\qquad \sup_{z \in \C} \|\mu(z,\cdot) - 1\|_{L^r(\C)}\leq c(r,D,  N), \qquad \text{for all } r \in (p', \infty]
\end{gather}
and
\begin{gather} \label{esthnear}
|h(\lambda)| \leq c(p,D, N) |\lambda|^{\varepsilon},\\ \label{estmunear}
\| \mu(\cdot, \lambda) - \psi_0\|_{W^{1,\tilde p}} \leq c(p,D,  N) |\lambda|^{\varepsilon},
\end{gather}
for $\lambda \leq \lambda_0(p,D,N)$ and $0 < \varepsilon < \frac{2}{p'}$, where $\frac{1}{p} + \frac{1}{p'} = 1$.
\end{prop}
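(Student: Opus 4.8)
The plan is to reduce everything to Faddeev's integral equation and then to exploit the conductivity structure \eqref{cv2}. First I would substitute $\mu = e^{-iz\lambda}\psi$ of \eqref{defmu} into \eqref{equa}; since $\Delta = 4\partial_z\partial_{\bar z}$ and $\partial_{\bar z}e^{iz\lambda}=0$, a short computation turns the Schr\"odinger equation into the modified equation $(-\Delta - 4i\lambda\,\partial_{\bar z})\mu = -v\mu$. Inverting the operator $-\Delta - 4i\lambda\partial_{\bar z}$ by its Faddeev Green's function $g_\lambda$ recasts this as the integral equation $\mu = 1 - g_\lambda * (v\mu)$. The normalisation $e^{-iz\lambda}\psi - 1 \in W^{1,\tilde p}$ is exactly the requirement that $\mu - 1$ lie in this space, so existence and uniqueness of $\psi$ is equivalent to unique solvability of this integral equation in $W^{1,\tilde p}$.

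For the solvability and the estimate \eqref{estpsi} I would use the classical mapping properties of $g_\lambda*(\cdot)$: by the Hardy--Littlewood--Sobolev inequality the convolution gains regularity, and, crucially, its operator norm on $L^{\tilde p}$ decays in $|\lambda|$. For $|\lambda|$ large this makes $g_\lambda*(v\,\cdot)$ a contraction, so a Neumann series yields both uniqueness and the bounds at the endpoints $s=0$ and $s=1$; interpolation between $L^{\tilde p}$ and $W^{1,\tilde p}$ then gives the full range $\|\mu - 1\|_{W^{s,\tilde p}} \leq c N|\lambda|^{s-1}$. Extending unique solvability to \emph{all} $\lambda$ is the delicate point, and it is here that \eqref{cv2} is indispensable: following Nachman, I would use that $v = (\Delta\psi_0)/\psi_0$ with $\psi_0 \geq c_0 > 0$ allows one to factor the second-order problem into a first-order Beals--Coifman system, equivalently a Beltrami equation, for which a positivity argument rules out exceptional points. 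This is the main obstacle, since without the conductivity structure the set of $\lambda$ for which the integral equation fails to be invertible could be non-empty.

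Next I would derive the $\bar\partial$-equation \eqref{dbar}. The key computation is the identity for $\partial g_\lambda/\partial\bar\lambda$, which produces precisely the factor $(4\pi\bar\lambda)^{-1}e_{-\lambda}(z)$; differentiating the integral equation in $\bar\lambda$ and invoking the reality symmetry $\psi(x,(-\bar\lambda,i\bar\lambda)) = \overline{\psi(x,\lambda)}$ noted in the text, one recognises the resulting integral as $h(\lambda)$ (see \eqref{defh}) and obtains $\partial_{\bar\lambda}\mu = (4\pi\bar\lambda)^{-1}h(\lambda)e_{-\lambda}\overline{\mu}$ in the $W^{1,\tilde p}$ topology. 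The $L^r$ bounds follow next: \eqref{prophlp} comes from inserting \eqref{estpsi} into \eqref{defh} and combining $L^p$--$L^r$ duality with the large-$|\lambda|$ decay, while \eqref{normmu} follows by treating \eqref{dbar} as a generalised Cauchy--Riemann (Vekua) equation and applying the boundedness of the Cauchy transform to the kernel $h(\lambda)/\bar\lambda$, which lies in the required $L^r$ spaces by \eqref{prophlp}.

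Finally, for the small-$\lambda$ behaviour \eqref{esthnear}--\eqref{estmunear}, I would again use the conductivity structure. As $\lambda \to 0$ the modified equation degenerates to $-\Delta\mu = -v\mu$, whose relevant normalised solution is exactly $\psi_0 = \sigma^{1/2}$, since $-\Delta\psi_0 + v\psi_0 = 0$ by \eqref{cv2} and $\psi_0\equiv 1$ outside $D$. Quantifying the continuity of the solution of the integral equation at $\lambda = 0$ through H\"older estimates on $g_\lambda - g_0$ gives $\|\mu(\cdot,\lambda) - \psi_0\|_{W^{1,\tilde p}} \leq c|\lambda|^\varepsilon$, and feeding this back into \eqref{defh}, where the leading contribution vanishes because $h(0) = \int_D v\psi_0 = \int_D \Delta\psi_0 = 0$ by the support condition on $\psi_0 - 1$, produces $|h(\lambda)| \leq c|\lambda|^\varepsilon$ for the admissible range $0 < \varepsilon < 2/p'$.
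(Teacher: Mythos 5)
Your outline is essentially Nachman's original argument, which is exactly what the paper relies on: Proposition \ref{mainprop} is stated without proof and quoted directly from \cite{Na}, and your chain --- reduction to the Faddeev integral equation $\mu = 1 - g_\lambda * (v\mu)$, contraction plus interpolation for large $|\lambda|$, removal of exceptional points for all $\lambda$ via the conductivity structure and a Liouville-type theorem for generalized analytic functions, the $\bar\partial$-equation in $\lambda$ from the identity for $\partial g_\lambda / \partial \bar\lambda$ together with the symmetry $\psi(x,(-\bar\lambda, i\bar\lambda)) = \overline{\psi(x,\lambda)}$, and the small-$\lambda$ asymptotics $\mu \to \psi_0$ with $h(0) = \int_D \Delta\psi_0 = 0$ --- is the route taken in \cite{Na}. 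The sketch is correct at its stated level of detail, so there is nothing to flag beyond noting that the paper itself offers only the citation.
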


\begin{rem}
Equation \eqref{dbar} means that $\mu$ is a generalised analytic function in $\lambda \in \C$ (see \cite{V}). In two-dimensional inverse scattering for the Schr\"odinger equation, the theory of generalised analytic functions was used for the first time in \cite{GN}.
\end{rem}

We recall that if $v \in W^{m,1}(\R^2)$ with $\supp v \subset D$, then $\|\hat{v} \|_m < +\infty$, where
\begin{gather}
\hat v(p) = (2 \pi)^{-2} \int_{\R^2} e^{ipx} v(x) dx, \qquad p \in \C^2, \\
\|u\|_m = \sup_{p \in \R^2} | (1+|p|^2)^{m/2} u(p)|,
\end{gather}
for a test function $u$.

In addition, if $v \in W^{m,1}(\R^2)$ with $\supp v \subset D$ and $m > 2$, we have, by Sobolev embedding, that
\begin{equation} \label{normvvv}
\|v\|_{L^{\infty}(D)} \leq c(D) \|v\|_{m,1},
\end{equation}
so, in particular, the hypothesis $v \in L^p(\R^2)$, $\supp v \subset D$, in the statement of Proposition \ref{mainprop} is satisfied for every $1<p<2$ (since $D$ is bounded).

The following lemma is a variation of a result in \cite{N4}:
\begin{lem} \label{lemh}
Under the assumption \eqref{cv1}, there exists $R=R(m,\|\hat v\|_m) > 0$ such that
\begin{equation}
|h(\lambda)| \leq 8 \pi^2 \|\hat v\|_m ( 1+4|\lambda|^2)^{-m/2}, \qquad \text{for } |\lambda| > R.
\end{equation}
\end{lem}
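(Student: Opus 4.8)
The plan is to read $h(\lambda)$ as a single Fourier coefficient of the compactly supported function $v\mu$ and then peel off the leading (Born) term. Inserting $\psi(z,\lambda)=e^{iz\lambda}\mu(z,\lambda)$ from \eqref{defmu} into \eqref{defh} and recognising $e^{i\bar z\bar\lambda}e^{iz\lambda}=e_{\lambda}(z)$ from \eqref{defexp} gives $h(\lambda)=\int_D e_{\lambda}(z)\,v(z)\,\mu(z,\lambda)\,d\Ree z\,d\Imm z$. Writing $z=x_1+ix_2$ and $\lambda=\lambda_1+i\lambda_2$ one has $z\lambda+\bar z\bar\lambda=2(x_1\lambda_1-x_2\lambda_2)$, so that $e_{\lambda}(z)=e^{ip(\lambda)\cdot x}$ with $p(\lambda)=(2\lambda_1,-2\lambda_2)$ and $|p(\lambda)|^2=4|\lambda|^2$. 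With the convention fixed above for $\hat{\,}$ this means $h(\lambda)=(2\pi)^2\,\widehat{v\mu}(p(\lambda))$, where $v\mu$ is the function $x\mapsto v(x)\mu(x,\lambda)$, supported in $D$.

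Since $\mu(\cdot,\lambda)\to1$ as $|\lambda|\to\infty$ (by \eqref{estpsi} with $s=0$), I would split off the Born contribution,
\begin{equation}\nonumber
h(\lambda)=(2\pi)^2\hat v(p(\lambda))+(2\pi)^2\,\widehat{v(\mu(\cdot,\lambda)-1)}(p(\lambda)).
\end{equation}
The first term is controlled at once from the definition of $\|\cdot\|_m$: since $|p(\lambda)|^2=4|\lambda|^2$, one gets $(2\pi)^2|\hat v(p(\lambda))|\le 4\pi^2\|\hat v\|_m(1+4|\lambda|^2)^{-m/2}$, which is exactly one half of the asserted bound. It therefore remains to show that for $|\lambda|$ large the remainder satisfies $|\widehat{v(\mu(\cdot,\lambda)-1)}(p(\lambda))|\le\|\hat v\|_m(1+4|\lambda|^2)^{-m/2}$; adding the two contributions then produces the constant $8\pi^2$.

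The decay of this remainder is the heart of the matter, and it is here that the smoothness hypothesis $v\in W^{m,1}$ is used. The elementary integration-by-parts estimate $|\hat u(p)|\le c(m)\|u\|_{W^{m,1}}(1+|p|^2)^{-m/2}$ reduces the claim to showing that $\|v(\mu(\cdot,\lambda)-1)\|_{W^{m,1}(\R^2)}\to0$ as $|\lambda|\to\infty$, fast enough that $c(m)\|v(\mu(\cdot,\lambda)-1)\|_{W^{m,1}}\le\|\hat v\|_m$ once $|\lambda|>R$; this is what fixes the threshold $R=R(m,\|\hat v\|_m)$. As $v$ and its derivatives up to order $m$ are bounded and compactly supported, by the Leibniz rule it suffices to prove that $\mu(\cdot,\lambda)-1$ together with its $z$-derivatives up to order $m$ tends to $0$ in $L^{\tilde p}(D)$. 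The order $\le1$ case is precisely \eqref{estpsi}, giving $\|\mu(\cdot,\lambda)-1\|_{W^{1,\tilde p}}=O(|\lambda|^{-1})$; the higher derivatives are obtained by differentiating the equation $\Delta\mu+4i\lambda\,\partial_{\bar z}\mu=v\mu$ (a consequence of \eqref{equa} and \eqref{defmu}) and iterating the high-energy estimates for the Faddeev Green's function, exactly as in the analysis of the scattering amplitude in \cite{N4}. This bootstrapping is the step I expect to be the main obstacle: the large coefficient $\lambda$ multiplying $\partial_{\bar z}\mu$ must be tracked carefully at each differentiation so that no positive power of $|\lambda|$ survives in the resulting bounds. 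Granting it, the remainder is $o\!\left((1+4|\lambda|^2)^{-m/2}\right)$, and choosing $R$ so that it falls within the budget displayed above completes the proof.
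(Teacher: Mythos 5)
Your reduction of $h(\lambda)$ to a Fourier coefficient of $v\mu$ at $p(\lambda)=(2\lambda_1,-2\lambda_2)$ and the Born splitting $h(\lambda)=(2\pi)^2\hat v(p(\lambda))+(2\pi)^2\widehat{v(\mu(\cdot,\lambda)-1)}(p(\lambda))$ are correct, and they do identify where the bound comes from. But note that the paper does not argue this way at all: its proof is a two-line reduction, observing $h(\lambda)=(2\pi)^2H(k(\lambda),k(\lambda)+\overline{k(\lambda)})$ with $k(\lambda)=(\lambda,i\lambda)$ and citing \cite[Corollary 1.1]{N4}, which asserts $|H(k,p)|\leq 2\|\hat v\|_m(1+p^2)^{-m/2}$ for $|\lambda|>R(m,\|\hat v\|_m)$. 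Novikov's result is itself proven not by differentiating $\mu$ in $z$, but by working with the integral equation satisfied by $H(k,\cdot)$ on the Fourier side and estimating it in the weighted norm $\|\cdot\|_m$: the smoothness of $v$ enters only through the finiteness of $\|\hat v\|_m$, and the decay in $p$ is propagated through a convolution kernel estimate, so no derivatives of $\mu$ are ever needed.

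This matters because the step you yourself flag as the main obstacle --- proving $\|v(\mu(\cdot,\lambda)-1)\|_{W^{m,1}}$ decays by differentiating $\Delta\mu+4i\lambda\,\partial_{\bar z}\mu=v\mu$ up to order $m$ and bootstrapping --- is essentially the entire content of the lemma, and as sketched it runs into a genuine wall, not just bookkeeping of powers of $\lambda$. First, your assertion that ``$v$ and its derivatives up to order $m$ are bounded'' is false under \eqref{cv1}: $v\in W^{m,1}(\R^2)$ gives $\partial^Jv\in L^\infty$ only for $|J|\leq m-2$ (the embedding behind \eqref{normvvv}), while the top-order derivatives are merely $L^1$. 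One can repair the Leibniz step by pairing $\partial^{J_1}v\in L^1(\R^2)$ with $L^\infty$ bounds on $\partial^{J_2}(\mu-1)$, but then the bootstrap must produce uniform (indeed $O(|\lambda|^{-1})$) $L^\infty$ control of $m$ derivatives of $\mu$, and the iteration generates source terms containing $\partial^Jv$ with $|J|=m$, which lie only in $L^1$; the Faddeev Green's function estimates underlying \eqref{estpsi} are of $L^p\to L^{\tilde p}$ type with $1<p<2$ and degenerate at the $L^1$ endpoint, so the top-order step of your scheme is not available in the form you need. Your constant budgeting ($4\pi^2+4\pi^2=8\pi^2$, with $|p(\lambda)|^2=4|\lambda|^2$) is fine, but as written the proof defers its essential claim to an unproven bootstrap; the natural way to close it is precisely the Fourier-side argument of \cite{N4} that the paper invokes.
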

\begin{proof}
We consider the function $H(k,p)$ defined as
\begin{equation}
H(k,p) = \frac{1}{(2 \pi)^2}\int_{\R^2}e^{i(p-k)x}v(x)\psi(x,k) dx,
\end{equation}
for $k \in \mathcal{V}$ (where $\mathcal{V}$ is defined in \eqref{defV}), $p \in \R^2$ and $\psi(x,k)$ as defined at the beginning of this section.

We deduce that $h(\lambda) = (2\pi)^2 H(k(\lambda), k(\lambda) + \overline{k(\lambda)})$, for $k(\lambda) = (\lambda, i \lambda)$. By \cite[Corollary 1.1]{N4} we have
\begin{equation}
|H(k,p)| \leq 2 \|\hat v\|_m ( 1+p^2)^{-m/2} \qquad \text{for } |\lambda| > R,
\end{equation}
for $R = R(m,\|\hat v\|_m) >0$ and then the proof follows.
\end{proof}

We restate \cite[Lemma 2.6]{BBR}, which will be useful in section \ref{sechv}.
\begin{lem}[\cite{BBR}] \label{lemtech}
Let $a \in L^{s_1}(\R^2) \cap L^{s_2}(\R^2)$, $1 < s_1 <2 < s_2 < \infty$ and $b \in L^s( \R^2)$, $1 < s <2$. Assume $u$ is a function in $L^{\tilde s}(\R^2)$, with $\tilde s$ defined as in \eqref{deftilde}, which satisfies
\begin{equation}
\frac{\partial u (\lambda)}{\partial \bar \lambda} = a(\lambda) \bar u(\lambda) + b(\lambda), \qquad \lambda \in \C.
\end{equation}
Then there exists $c >0$ such that
\begin{equation}
\|u\|_{L^{\tilde s}} \leq c \|b\|_{L^s} \exp(c (\|a\|_{L^{s_1}}+\|a\|_{L^{s_2}})).
\end{equation}
\end{lem}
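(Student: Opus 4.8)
The plan is to solve the generalised Cauchy--Riemann equation by combining the mapping properties of the solid Cauchy transform with Vekua's similarity principle. Write $P$ for the operator
\[
Pf(\lambda) = \frac{1}{\pi}\int_{\C}\frac{f(\zeta)}{\lambda-\zeta}\,d\Ree\zeta\,d\Imm\zeta,
\]
which inverts $\partial/\partial\bar\lambda$, i.e. $\partial(Pf)/\partial\bar\lambda = f$. Two mapping properties carry the argument. First, since the kernel is the Riesz potential of order one on $\R^2$, the Hardy--Littlewood--Sobolev inequality gives $\|Pf\|_{L^{\tilde s}}\leq c\|f\|_{L^s}$ whenever $1/\tilde s = 1/s-1/2$ and $1<s<2$, which is exactly the exponent relation \eqref{deftilde}. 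Second, splitting the kernel into the regions $|\zeta-\lambda|\leq 1$ and $|\zeta-\lambda|>1$ and applying Hölder's inequality on each (using $s_2>2$, so that $|\cdot|^{-1}$ is locally integrable to the dual power, and $s_1<2$, so that it is integrable at infinity to the dual power) yields $\|Pf\|_{L^\infty}\leq c(\|f\|_{L^{s_1}}+\|f\|_{L^{s_2}})$; this is precisely where the hypothesis $s_1<2<s_2$ enters. Finally I record the Liouville-type fact that any $w\in L^{\tilde s}(\C)$ with $\partial w/\partial\bar\lambda = 0$ vanishes identically: $w$ is entire by Weyl's lemma, and the mean value property forces $|w(\lambda)|\leq (\pi r^2)^{-1}\int_{|\zeta-\lambda|<r}|w|\to 0$ as $r\to\infty$.

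Next I would remove the conjugate-linear term $a\bar u$ by a similarity transformation. Define
\[
\phi(\lambda) = \begin{cases} a(\lambda)\,\overline{u(\lambda)}/u(\lambda), & u(\lambda)\neq 0,\\[2pt] a(\lambda), & u(\lambda)=0,\end{cases}
\]
so that $|\phi|=|a|$ pointwise and $\phi\,u = a\bar u$ almost everywhere; in particular $\phi\in L^{s_1}\cap L^{s_2}$ with $\|\phi\|_{L^{s_i}}=\|a\|_{L^{s_i}}$. Set $s = P\phi$, a bounded continuous function with $\|s\|_{L^\infty}\leq A := c(\|a\|_{L^{s_1}}+\|a\|_{L^{s_2}})$ by the second mapping property, and put $w = e^{-s}u$. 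Using $\partial s/\partial\bar\lambda=\phi$ together with the chain rule (valid since $s$ is continuous and in $W^{1,r}_{loc}$ for some $r>2$) one computes
\[
\frac{\partial w}{\partial\bar\lambda} = e^{-s}\Big(\frac{\partial u}{\partial\bar\lambda} - \phi\,u\Big) = e^{-s}\big(a\bar u + b - a\bar u\big) = e^{-s}\,b,
\]
so that $w$ solves the inhomogeneous equation $\partial w/\partial\bar\lambda = e^{-s}b$ with no conjugation term.

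It then remains to read off the estimate. Since $|e^{-s}|\leq e^{A}$ and $b\in L^s$, the right-hand side $g:=e^{-s}b$ lies in $L^s$ with $\|g\|_{L^s}\leq e^{A}\|b\|_{L^s}$, hence $Pg\in L^{\tilde s}$. Because $e^{-s}$ is bounded and $u\in L^{\tilde s}$ we also have $w\in L^{\tilde s}$, so $w-Pg\in L^{\tilde s}$ and $\partial(w-Pg)/\partial\bar\lambda=0$; by the Liouville fact $w=Pg$. The first mapping property now gives $\|w\|_{L^{\tilde s}}=\|Pg\|_{L^{\tilde s}}\leq c\|g\|_{L^s}\leq c\,e^{A}\|b\|_{L^s}$, and since $u=e^{s}w$ with $|e^{s}|\leq e^{A}$ we conclude
\[
\|u\|_{L^{\tilde s}}\leq e^{A}\|w\|_{L^{\tilde s}}\leq c\,e^{2A}\|b\|_{L^s}=c\,\|b\|_{L^s}\exp\big(2c(\|a\|_{L^{s_1}}+\|a\|_{L^{s_2}})\big),
\]
which is the assertion after renaming the constant.

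I expect the main obstacle to be the rigorous justification of the transformed equation rather than the estimates themselves: one must verify the distributional chain rule $\partial(e^{-s})/\partial\bar\lambda = -\phi e^{-s}$ for the merely Hölder-continuous potential $s=P\phi$, handle the measurability of $\phi$ and the identity $\phi u = a\bar u$ on the (possibly positive-measure) set $\{u=0\}$, and confirm the representation of $u$ through the Liouville step. The \emph{linear}, rather than quadratic, dependence of the exponent on $\|a\|_{L^{s_1}}+\|a\|_{L^{s_2}}$ is forced by routing the coefficient through the $L^\infty$ bound on $P\phi$; a naive Neumann-series decomposition of $a$ into pieces of small $L^2$ norm would instead produce $\exp(c\|a\|_{L^2}^2)$, so the straddling condition $s_1<2<s_2$ is essential to obtain the stated form.
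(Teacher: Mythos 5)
Your argument is correct, and it is essentially the standard similarity-principle proof of this lemma: the paper itself gives no proof, quoting the result from \cite{BBR}, and the proof there proceeds exactly as you do --- remove the conjugate-linear term via $e^{-P\phi}$ with $\phi = a\bar u/u$, use the $L^{s_1}\cap L^{s_2}\to L^\infty$ and $L^s \to L^{\tilde s}$ mapping properties of the Cauchy transform, and conclude by the Liouville-type theorem for entire functions in $L^{\tilde s}$. The technical points you flag (the distributional chain rule for $s = P\phi \in W^{1,s_2}_{loc}\cap L^\infty$ with $s_2>2$, and the identity $\phi u = a\bar u$ on $\{u=0\}$, where both sides vanish) are genuine but routine, so nothing is missing.
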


We will make also use of the well-known H\"older's inequality, which we recall in a special case: for $f \in L^p(\C)$, $g \in L^q(\C)$ such that $1 \leq p,q\leq \infty$, $1\leq r < \infty$, $1/p+1/q = 1/r$, we have
$$ \|fg\|_{L^r(\C)}\leq \|f\|_{L^p(\C)}\|g\|_{L^q(\C)}.$$

\section{From $\Phi$ to $h(\lambda)$} \label{secphih}

\begin{lem} \label{lemesth}
Let the condition \eqref{cv1} holds. Then we have, for $p \geq 1$,
\begin{align} \label{esthlg}
\left\| \frac{h(\lambda)}{\bar \lambda} \right\|_{L^p(|\lambda| > R)}\! \! \! \! \! \! \!  &\leq c(p,m)\|\hat v\|_m \frac{1}{R^{m+1-2/p}},\\ \label{esthg}
\|h\|_{L^p(|\lambda| > R)} &\leq c(p,m)\|\hat v\|_m \frac{1}{R^{m-2/p}},
\end{align} 
where $R$ is as in Lemma \ref{lemh}.
\end{lem}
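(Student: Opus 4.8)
The plan is to obtain both inequalities \eqref{esthlg} and \eqref{esthg} directly from the pointwise decay bound of Lemma \ref{lemh}, by integrating in polar coordinates over the region $|\lambda| > R$. Since $R$ is taken exactly as in Lemma \ref{lemh}, for every $\lambda$ with $|\lambda| > R$ we have $|h(\lambda)| \leq 8\pi^2 \|\hat v\|_m (1 + 4|\lambda|^2)^{-m/2}$. My first move would be to discard the constant inside the parenthesis by the crude bound $(1+4|\lambda|^2)^{-m/2} \leq (4|\lambda|^2)^{-m/2} = 2^{-m}|\lambda|^{-m}$, valid for all $\lambda \neq 0$. This converts the decay into a pure negative power of $|\lambda|$ and makes the remaining integrals elementary.

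For \eqref{esthg} I would then write
\[
\|h\|_{L^p(|\lambda|>R)}^p = \int_{|\lambda|>R} |h(\lambda)|^p \, d\Ree\lambda \, d\Imm\lambda \leq \bigl(8\pi^2 2^{-m}\|\hat v\|_m\bigr)^p \int_{|\lambda|>R} |\lambda|^{-mp}\, d\Ree\lambda\, d\Imm\lambda,
\]
and pass to polar coordinates $\lambda = \rho e^{i\theta}$, so that the last integral becomes $2\pi \int_R^\infty \rho^{1-mp}\, d\rho$. The estimate \eqref{esthlg} is handled in the same way, the only difference being the extra factor $|\bar\lambda|^{-p} = |\lambda|^{-p}$, which raises the exponent and gives $2\pi\int_R^\infty \rho^{1-(m+1)p}\, d\rho$. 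Evaluating these two radial integrals and taking $p$-th roots reproduces precisely the powers $R^{-(m-2/p)}$ and $R^{-(m+1-2/p)}$ on the right-hand sides, the remaining constants being absorbed into $c(p,m)$.

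The only point that requires checking — and the closest thing to an obstacle — is the convergence of the two radial integrals at infinity. The integral $\int_R^\infty \rho^{1-mp}\, d\rho$ converges iff $mp > 2$, and $\int_R^\infty \rho^{1-(m+1)p}\,d\rho$ converges iff $(m+1)p > 2$; both hold automatically under the hypotheses, since $m > 2$ and $p \geq 1$ give $mp \geq m > 2$. Under these conditions one has $\int_R^\infty \rho^{1-mp}\,d\rho = R^{2-mp}/(mp-2)$ and $\int_R^\infty \rho^{1-(m+1)p}\,d\rho = R^{2-(m+1)p}/((m+1)p-2)$, and the factors $(mp-2)^{-1/p}$ and $((m+1)p-2)^{-1/p}$ are exactly the $(p,m)$-dependent constants entering $c(p,m)$. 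Thus no delicate estimate is needed beyond the clean power-law decay already supplied by Lemma \ref{lemh}.
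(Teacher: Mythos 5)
Your proposal is correct and follows essentially the same route as the paper: the paper likewise treats the lemma as a direct corollary of Lemma \ref{lemh}, replacing $(1+4|\lambda|^2)^{-m/2}$ by a multiple of $|\lambda|^{-m}$ and evaluating the radial integral $\int_{r>R} r^{1-(m+1)p}\,dr$ (respectively $\int_{r>R} r^{1-mp}\,dr$) in polar coordinates to get the stated powers of $R$. Your explicit verification that $mp>2$ (from $m>2$, $p\geq 1$) makes the convergence at infinity transparent, a point the paper leaves implicit.
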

\begin{proof}
It's a corollary of Lemma \ref{lemh}. Indeed we have
\begin{align}
&\left\| \frac{h(\lambda)}{\bar \lambda} \right\|_{L^p(|\lambda| > R)}^p \leq c \|\hat v\|_m^p \int_{r > R} r^{1-mp-p} dr = \frac{c(p,m)\|\hat v\|_m^p }{R^{(m+1)p-2}},
\end{align}
which gives \eqref{esthlg}. The proof of \eqref{esthg} is analogous.
\end{proof}

\begin{lem} \label{lemdifh}
Let $D \subset \{ x \in \R^2 \, : \, |x| \leq l\}$, $v_1, v_2$ be two potentials satisfying \eqref{direig}, \eqref{cv1}, \eqref{cv2}, let $\Phi_1, \Phi_2$ the corresponding Dirichlet-to-Neumann operator and $h_1, h_2$ the corresponding generalised scattering amplitude. Let $\|v_j\|_{m,1} \leq N$, $j=1,2$. Then we have
\begin{equation} \label{estdifh}
|h_2(\lambda) - h_1(\lambda)|\leq c(D,N)e^{2l|\lambda|}\|\Phi_2 - \Phi_1\|_{H^{1/2}\to H^{-1/2}}, \quad \lambda \in \C.
\end{equation}
\end{lem}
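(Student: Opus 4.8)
The plan is to bound the difference $h_2(\lambda)-h_1(\lambda)$ by relating the generalised scattering amplitude $h_j(\lambda)$ to a boundary pairing involving $\Phi_j$, and then to control the exponential growth of the Faddeev solutions on the boundary. Recall from \eqref{defh} that $h_j(\lambda) = \int_D e^{i\bar z \bar\lambda} v_j(z)\psi_j(z,\lambda)\, d\Ree z\, d\Imm z$. Since $(-\Delta + v_j)\psi_j = 0$ in $\R^2$, the integrand $v_j \psi_j = \Delta \psi_j$, and I expect to convert this volume integral into a boundary integral. The key tool here is the \emph{Alessandrini identity}: for two potentials $v_1, v_2$ with solutions $u_1, u_2$ of $(-\Delta + v_j)u_j = 0$ having the same boundary trace considerations, one has

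\begin{equation} \nonumber
\int_D (v_2 - v_1)\, u_1 u_2 \, dx = \langle (\Phi_2 - \Phi_1)\, u_1|_{\partial D},\, u_2|_{\partial D}\rangle.
\end{equation}

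\emph{First}, I would express $h_2(\lambda)-h_1(\lambda)$ through this identity by choosing the test functions to be the Faddeev solutions. The natural choice is to write $h_j(\lambda)$ in terms of $\psi_j(z,\lambda)$ and the conjugate-type exponential $e^{i\bar z\bar\lambda}$; note that $e^{i\bar z\bar\lambda}$ is (up to the relevant sign convention) the Faddeev solution associated to the zero potential, or equivalently corresponds to $\psi(z,(-\bar\lambda,i\bar\lambda))$ via the reality relation already stated in the excerpt. Thus the difference $h_2-h_1$ should equal, via Alessandrini, a boundary pairing of the form $\langle (\Phi_2-\Phi_1)\,\psi_1|_{\partial D},\, \overline{\psi_2}|_{\partial D}\rangle$ (or $\psi_2$ with the appropriate sheet), since the two volume integrals defining $h_1$ and $h_2$ both carry the same exponential factor. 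The care needed is to verify that the boundary traces match up so that the volume terms combine into exactly $\int_D(v_2-v_1)\psi_1\psi_2$.

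\emph{Second}, having reduced the problem to
\begin{equation} \nonumber
|h_2(\lambda)-h_1(\lambda)| \leq \|\Phi_2-\Phi_1\|_{H^{1/2}\to H^{-1/2}}\; \|\psi_1(\cdot,\lambda)\|_{H^{1/2}(\partial D)}\; \|\psi_2(\cdot,\lambda)\|_{H^{1/2}(\partial D)},
\end{equation}
I would estimate the boundary norms of the Faddeev solutions. Writing $\psi_j(z,\lambda) = e^{iz\lambda}\mu_j(z,\lambda)$ from \eqref{defmu}, the exponential factor contributes $|e^{iz\lambda}| = e^{-\Imm(z\lambda)} \leq e^{l|\lambda|}$ on $\partial D$ since $|z|\leq l$; the factor $\mu_j - 1$ is controlled in $W^{1,\tilde p}$, hence by trace theorems in $H^{1/2}(\partial D)$, by the bounds \eqref{estpsi} and \eqref{normmu} of Proposition \ref{mainprop} uniformly in $\lambda$ up to a polynomial (absorbable) factor. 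Multiplying the two boundary norms yields the product $e^{l|\lambda|}\cdot e^{l|\lambda|} = e^{2l|\lambda|}$, which is precisely the exponential in \eqref{estdifh}, with the remaining constant depending only on $D$ and $N$.

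\emph{The main obstacle} I anticipate is the rigorous bookkeeping of the boundary-trace estimates for $\psi_j(\cdot,\lambda)$ in the $H^{1/2}(\partial D)$ norm, uniformly in $\lambda$. The exponential growth is easy to isolate, but one must ensure that the $\lambda$-dependent $W^{1,\tilde p}$ bounds on $\mu_j - 1$ from \eqref{estpsi}--\eqref{normmu} translate into a \emph{uniform} (or at worst polynomially growing, hence harmless against the explicit exponential) $H^{1/2}(\partial D)$ bound via a trace inequality, and that the constant is genuinely $c(D,N)$ rather than $\lambda$-dependent. A secondary subtlety is confirming that the exponential weight $e^{i\bar z\bar\lambda}$ appearing in the definition \eqref{defh} of $h$ is correctly matched to the conjugate Faddeev solution so that the Alessandrini identity applies cleanly; here the reality relation $\overline{\psi(z,\lambda)} = \psi(z,(-\bar\lambda,i\bar\lambda))$ noted after \eqref{defV} should make the identification, and one only needs the growth bound $|e^{iz\lambda}|\le e^{l|\lambda|}$ on both factors to close the estimate.
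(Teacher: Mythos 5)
Your proposal is correct and follows essentially the same route as the paper's proof: the paper likewise reduces to the identity $h_2(\lambda)-h_1(\lambda)=\int_{\partial D}\overline{\psi_1(z,\lambda)}\,(\Phi_2-\Phi_1)\,\psi_2(z,\lambda)\,|dz|$ (citing \cite[Theorem 1]{N3} for the sheet bookkeeping you flag as the secondary subtlety), and then bounds each factor $\|\psi_j(\cdot,\lambda)\|_{H^{1/2}(\partial D)}\leq C(D,N)e^{l|\lambda|}$ via the trace theorem with $\tilde p>2$ together with the estimates \eqref{estpsi} and \eqref{normmu}. Your decomposition $\psi_j=e^{iz\lambda}\mu_j$ with $|e^{iz\lambda}|\leq e^{l|\lambda|}$ and the multiplication of the two boundary norms to produce $e^{2l|\lambda|}$ is exactly the paper's argument.
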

\begin{proof}
We have the following identity:
\begin{equation} \label{aless}
h_2(\lambda) - h_1(\lambda) = \int_{\partial D}\overline{\psi_1(z,\lambda)}(\Phi_2 - \Phi_1)\psi_2(z,\lambda) |dz|,
\end{equation}
where $\psi_j(z,\lambda)$ are the Faddeev functions associated to the potential $v_j$, $j=1,2$. This identity is a particular case of the one in \cite[Theorem 1]{N3}: we refer to that paper for a proof.

From this identity we have:
\begin{align} \label{estlem1}
|h_2(\lambda) - h_1(\lambda)| \leq \|\psi_1(\cdot,\lambda)\|_{H^{1/2}(\partial D)}\|\Phi_2 - \Phi_1\|_{H^{1/2}\to H^{-1/2}} \|\psi_2(\cdot,\lambda)\|_{H^{1/2}(\partial D)}.
\end{align}
Now take $\tilde p>2$ and use the trace theorem to get
\begin{align*}
&\|\psi_j(\cdot,\lambda)\|_{H^{1/2}(\partial D)} \leq C \|\psi_j(\cdot,\lambda)\|_{W^{1,\tilde p}(D)} \leq C e^{l|\lambda|}\|e^{-iz\lambda}\psi_j(\cdot,\lambda)\|_{W^{1,\tilde p}(D)}\\
&\qquad \leq  C e^{l|\lambda|}\left(\|e^{-iz\lambda}\psi_j(\cdot,\lambda)-1\|_{W^{1,\tilde p}(D)}+\|1\|_{W^{1,\tilde p}(D)}\right), \qquad j=1,2,
\end{align*}
which from \eqref{estpsi} and \eqref{normmu} is bounded by $C(D,N) e^{l|\lambda|}$. These estimates together with \eqref{estlem1} give \eqref{estdifh}.
\end{proof}
The main results of this section are the following propositions:
\begin{prop} \label{prophdir}
Let $v_1, v_2$ be two potentials satisfying \eqref{direig}, \eqref{cv1}, \eqref{cv2}, let $\Phi_1, \Phi_2$ the corresponding Dirichlet-to-Neumann operator and $h_1, h_2$ the corresponding generalised scattering amplitude. Let $0< \varepsilon < 1$, $1 < p < \frac{2}{1-\varepsilon}$ and $\|v_j\|_{m,1} \leq N$, $j=1,2$. Then there exists a constant $c= c(D, N,m,p)$ such that
\begin{equation} \label{mainesth}
\left\| \frac{h_2(\lambda) - h_1(\lambda)}{\bar \lambda}\right\|_{L^p(\C)} \leq c \log(3 + \|\Phi_2 - \Phi_1\|_{H^{1/2}\to H^{-1/2}}^{-1})^{-(m+1-2/p)}.
\end{equation}
\end{prop}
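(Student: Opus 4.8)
The plan is to integrate over $\C$ after splitting it into annular regions according to the size of $|\lambda|$, and to play the two available estimates for $h_2-h_1$ against each other: for large $|\lambda|$ the smoothness estimate of Lemma \ref{lemesth} forces $h_j/\bar\lambda$ to be small, while for bounded $|\lambda|$ the identity-based bound of Lemma \ref{lemdifh}, which carries the factor $e^{2l|\lambda|}\|\Phi_2-\Phi_1\|$, is small provided $|\lambda|$ is not too large. Writing $\delta=\|\Phi_2-\Phi_1\|_{H^{1/2}\to H^{-1/2}}$ and $D\subset\{|x|\le l\}$, I would first dispose of the case $\delta\ge\delta_0$ for a fixed $\delta_0$: there the right-hand side is bounded below, whereas the left-hand side is bounded above by $\|h_1/\bar\lambda\|_{L^p(\C)}+\|h_2/\bar\lambda\|_{L^p(\C)}\le c(D,N,m,p)$, a bound that follows from \eqref{esthnear} near the origin (which is $L^p$-integrable against $|\lambda|^{-p}$ precisely because $p<\frac{2}{1-\varepsilon}$) and from Lemma \ref{lemesth} at infinity. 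So I may assume $\delta<\delta_0<1$.

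For such $\delta$ I would cut $\C$ into three regions by radii $0<\eta<\rho$, to be fixed at the end,
$$A=\{|\lambda|\le\eta\},\qquad B=\{\eta<|\lambda|\le\rho\},\qquad C=\{|\lambda|>\rho\}.$$
On $C$ the triangle inequality together with Lemma \ref{lemesth} applied to $h_1$ and $h_2$ separately gives
$$\left\|\frac{h_2-h_1}{\bar\lambda}\right\|_{L^p(C)}\le c(p,m)\,\|\hat v\|_m\,\rho^{-(m+1-2/p)},$$
which is exactly the target rate, so $\rho$ must be chosen of logarithmic size. On $B$ I would apply Lemma \ref{lemdifh}, bounding $e^{2l|\lambda|}\le e^{2l\rho}$, to produce a factor $e^{2l\rho}\delta$ against a weighted volume. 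Choosing
$$\rho=\frac{1}{4l}\log(3+\delta^{-1})$$
makes $e^{2l\rho}\delta=(3+\delta^{-1})^{1/2}\delta\le 2\delta^{1/2}$, so that the $C$-contribution is of order $(\log(3+\delta^{-1}))^{-(m+1-2/p)}$ while the exponential growth on $B$ is defeated by the square-root gain in $\delta$.

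The main obstacle is the region $A$ near the origin, where the weight $|\lambda|^{-p}$ is not integrable once $p\ge 2$, so that Lemma \ref{lemdifh} alone is useless ($\int_A|\lambda|^{-p}\,d\lambda=+\infty$). To control it I would instead invoke the individual smallness estimate \eqref{esthnear}, $|h_j(\lambda)|\le c|\lambda|^{\varepsilon}$ for $|\lambda|\le\lambda_0$, which yields $|h_2-h_1|/|\lambda|\le c|\lambda|^{\varepsilon-1}$ and hence
$$\left\|\frac{h_2-h_1}{\bar\lambda}\right\|_{L^p(A)}\le c\,\eta^{\varepsilon-1+2/p},$$
the exponent being positive exactly because $p<\frac{2}{1-\varepsilon}$. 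This term carries no smallness in $\delta$ by itself, so $\eta$ must shrink with $\delta$; balancing it against the $B$-contribution $c\,e^{2l\rho}\delta\,\eta^{2/p-1}$ leads to the choice $\eta=(e^{2l\rho}\delta)^{1/\varepsilon}$, for which both the $A$- and $B$-terms are bounded by a fixed positive power of $\delta$ and are therefore negligible compared with the logarithmic rate from $C$. Collecting the three estimates and using $e^{2l\rho}\delta\le 2\delta^{1/2}$ then gives \eqref{mainesth}; one should check along the way that for $\delta$ small enough the radii satisfy $\eta\le\min(\lambda_0,\rho)$ and $\rho\ge\max(\lambda_0,R)$, so that all invoked estimates are legitimate on their respective regions.
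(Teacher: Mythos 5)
Your proposal is correct and follows essentially the same route as the paper's proof: the same three-region splitting of $\C$, with \eqref{esthnear} controlling $|\lambda|<\eta$ (where the hypothesis $p<\frac{2}{1-\varepsilon}$ enters), Lemma \ref{lemdifh} on the middle annulus, Lemma \ref{lemesth} for $|\lambda|>\rho$, the outer radius taken as $\beta\log(3+\delta^{-1})$ with $\beta<1/(2l)$, and the large-$\delta$ case disposed of by a priori $L^p$-bounds on $h_j/\bar\lambda$. The only cosmetic difference is your choice $\eta=(e^{2l\rho}\delta)^{1/\varepsilon}$ in place of the paper's $a=\log(3+\delta^{-1})^{-\frac{m+1-2/p}{\varepsilon-1+2/p}}$; both make the inner and middle contributions negligible against the logarithmic rate from the outer region.
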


\begin{prop} \label{prophdir2}
Let $v_1, v_2, \Phi_1, \Phi_2, h_1, h_2$ be as in Proposition \ref{prophdir}. Let $p \geq 1$ and $\|v_j\|_{m,1} \leq N$, $j=1,2$. Then there exists a constant $c= c(D, N,m,p)$ such that
\begin{equation} \label{mainesth2}
\|h_2 - h_1\|_{L^p(\C)} \leq c \log(3 + \|\Phi_2 - \Phi_1\|_{H^{1/2}\to H^{-1/2}}^{-1})^{-(m-2/p)}.
\end{equation}
\end{prop}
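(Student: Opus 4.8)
The plan is to run exactly the argument behind Proposition \ref{prophdir}, but with the unweighted decay estimate \eqref{esthg} in place of the weighted one \eqref{esthlg}. Write $\delta = \|\Phi_2 - \Phi_1\|_{H^{1/2}\to H^{-1/2}}$ and fix $l$ with $D \subset \{x \in \R^2 : |x| \leq l\}$. For a cutoff radius $\rho > R$, with $R$ as in Lemma \ref{lemh}, split
\[
\|h_2 - h_1\|_{L^p(\C)} \leq \|h_2 - h_1\|_{L^p(|\lambda|\leq \rho)} + \|h_2-h_1\|_{L^p(|\lambda|>\rho)},
\]
and control the two pieces by entirely different mechanisms: a boundary-data estimate at low frequency and a smoothness-driven decay estimate at high frequency.

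For the low-frequency part I would insert the pointwise bound of Lemma \ref{lemdifh}, namely $|h_2(\lambda)-h_1(\lambda)| \leq c(D,N)\, e^{2l|\lambda|}\,\delta$, into the $L^p$ norm over the disk $\{|\lambda|\leq\rho\}$. Integrating $e^{2pl|\lambda|}$ in polar coordinates over the disk produces, for large $\rho$, a factor of order $\rho\, e^{2pl\rho}$, so that
\[
\|h_2 - h_1\|_{L^p(|\lambda|\leq\rho)} \leq c(D,N,p,l)\,\delta\,\rho^{1/p}\, e^{2l\rho}.
\]
For the high-frequency part I would discard the difference structure and apply the triangle inequality together with \eqref{esthg} to $h_1$ and $h_2$ separately. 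The crude Fourier bound $|p^J \hat v(p)| \leq (2\pi)^{-2}\|\partial^J v\|_{L^1}$ gives $\|\hat v_j\|_m \leq c(m)\|v_j\|_{m,1} \leq c(m)N$, whence
\[
\|h_2 - h_1\|_{L^p(|\lambda|>\rho)} \leq c(p,m,N)\,\rho^{-(m-2/p)}.
\]

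The decisive step, and the only delicate one, is choosing $\rho$ so as to balance an exponentially growing term against a polynomially decaying one. I would take $\rho = \beta \log(3+\delta^{-1})$ with $\beta = \beta(l)$ fixed so small that $2l\beta < 1$. Then $e^{2l\rho} = (3+\delta^{-1})^{2l\beta}$, so the low-frequency contribution is at most $c\,\delta^{\,1-2l\beta}\,(\log(3+\delta^{-1}))^{1/p}$; since $1-2l\beta > 0$, this decays like a positive power of $\delta$ and is therefore dominated, for small $\delta$, by any negative power of the logarithm. The high-frequency contribution is exactly $c\,\beta^{-(m-2/p)}(\log(3+\delta^{-1}))^{-(m-2/p)}$, which is the claimed bound (note $m-2/p>0$ since $m>2$ and $p\geq 1$). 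Two bookkeeping remarks complete the scheme: one must verify $\rho > R$ so that Lemma \ref{lemh}, hence \eqref{esthg}, applies, which holds automatically once $\delta$ is small enough; and in the complementary regime where $\delta$ is bounded away from $0$, the right-hand side of \eqref{mainesth2} is bounded below by a positive constant, so the estimate holds trivially after enlarging $c$. Absorbing these two regimes into the constant $c(D,N,m,p)$ yields \eqref{mainesth2}.
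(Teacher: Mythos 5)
Your proposal is correct and follows essentially the same route as the paper's own proof: the same two-region splitting at the cutoff $\rho=\beta\log(3+\delta^{-1})$ with $2l\beta<1$, with Lemma \ref{lemdifh} controlling the disk $|\lambda|\leq\rho$, the decay estimate \eqref{esthg} (resting on Lemma \ref{lemh}) controlling $|\lambda|>\rho$, and the large-$\delta$ regime absorbed into the constant. The only point you leave slightly implicit is that ``enlarging $c$'' when $\delta$ is bounded away from zero also requires the uniform bound $\|h_2-h_1\|_{L^p(\C)}\leq c(D,N,m,p)$, which holds, as the paper notes, because each $h_j$ is continuous (cf.\ \eqref{esthnear}) and decays at infinity as in Lemma \ref{lemh}.
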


\begin{proof}[Proof of Proposition \ref{prophdir}]
Let choose $a, b >0$, $a$ close to $0$ and $b$ big to be determined and let 
\begin{equation} \label{defdelta}
\delta = \|\Phi_2 - \Phi_1\|_{H^{1/2}\to H^{-1/2}}.
\end{equation}
We split down the left term of \eqref{mainesth} as follows:
\begin{align*}
\left\| \frac{h_2(\lambda) - h_1(\lambda)}{\bar \lambda}\right\|_{L^p(\C)}\! \! \! \! \! &\leq \left\| \frac{h_2(\lambda) - h_1(\lambda)}{\bar \lambda}\right\|_{L^p(|\lambda| < a)} \! \! \! + \left\| \frac{h_2(\lambda) - h_1(\lambda)}{\bar \lambda}\right\|_{L^p(a < |\lambda|< b)}\\
&\qquad + \left\| \frac{h_2(\lambda) - h_1(\lambda)}{\bar \lambda}\right\|_{L^p(|\lambda| > b)}.
\end{align*}
From \eqref{esthnear} we obtain
\begin{align} \label{est1}
\left\| \frac{h_2(\lambda) - h_1(\lambda)}{\bar \lambda}\right\|_{L^p(|\lambda| < a)} \! \! \! \! \!\! \! \! \! \! &\leq c(D,N,p)\left(\int_{|\lambda| < a} |\lambda|^{(\varepsilon-1)p}d\Ree \lambda\, d \Imm \lambda \right)^{\frac 1 p}\\ \nonumber
&=c(D,N,p) a^{\varepsilon-1+2/p}.
\end{align}
From Lemma \ref{lemdifh} and \eqref{defdelta} we get
\begin{equation} \label{est2}
\left\| \frac{h_2(\lambda) - h_1(\lambda)}{\bar \lambda}\right\|_{L^p(a < |\lambda|< b)} \leq c(D,N) \left(\frac{\delta}{a^{1-2/p}}+\delta e^{2lb}\right).
\end{equation}
From Lemma \ref{lemesth}
\begin{equation} \label{est3}
\left\| \frac{h_2(\lambda) - h_1(\lambda)}{\bar \lambda}\right\|_{L^p(|\lambda| > b)} \leq \frac{c(N)}{b^{m+1-2/p}}.
\end{equation}
We now define
\begin{equation}
a =\log(3+\delta^{-1})^{-\frac{m+1-2/p}{\varepsilon-1+2/p}}, \qquad b=\beta \log(3+\delta^{-1}),
\end{equation}
for $0 < \beta < 1/(2l)$, in order to have \eqref{est1} and \eqref{est3} of the order $\log(3+\delta^{-1})^{-(m+1-2/p)}$. We also choose $\bar \delta < 1$ such that for every $\delta \leq \bar \delta$, $a$ is sufficiently small in order to have \eqref{esthnear} (which yields \eqref{est1}), $b \geq R$ (with $R$ as in Lemma \ref{lemh}) and also
\begin{equation}
\frac{\delta}{a^{1-2/p}}= \delta \log(3 + \delta^{-1})^{\left(\frac{m+1-2/p}{\varepsilon - 1 + 2/p}\right) (1-2/p)} < \log(3 + \delta^{-1})^{-(m+1-2/p)}.
\end{equation}
Thus we obtain
\begin{align}
&\left\| \frac{h_2(\lambda) - h_1(\lambda)}{\bar \lambda}\right\|_{L^p(\C)} \leq \frac{c(D,N,p)}{\log(3+\delta^{-1})^{m+1-2/p}}\\ \nonumber
&\qquad + c(D,N) \delta (3+\delta^{-1})^{2l\beta},
\end{align}
for $\delta \leq \bar \delta$, $0 < \beta < 1/(2l)$. As $\delta (3+\delta^{-1})^{2l\beta} \to 0$ for $\delta \to 0$ more rapidly than the other term, we obtain that
\begin{equation}\label{esthdir}
\left\| \frac{h_2(\lambda) - h_1(\lambda)}{\bar \lambda}\right\|_{L^p(\C)} \leq \frac{c(D,N,m,p,\beta)}{\log(3+\delta^{-1})^{m+1-2/p}},
\end{equation}
for $\delta \leq \bar \delta$, $0 < \beta < 1/(2l)$.

Estimate \eqref{esthdir} for general $\delta$ (with modified constant) follows from \eqref{esthdir} for $\delta \leq \bar \delta$ and the property \eqref{prophlp} of the scattering amplitude. This completes the proof of Proposition \ref{prophdir}.
\end{proof}
\begin{proof}[Proof of Proposition \ref{prophdir2}]
We follow almost the same scheme as in the proof of Proposition \ref{prophdir}.
Let choose $b >0$ big to be determined and let 
\begin{equation} \label{defdelta2}
\delta = \|\Phi_2 - \Phi_1\|_{H^{1/2}\to H^{-1/2}}.
\end{equation}
We split down the left term of \eqref{mainesth2} as follows:
\begin{align*}
\|h_2 - h_1\|_{L^p(\C)} \leq \|h_2 - h_1\|_{L^p(|\lambda|<b)} + \|h_2 - h_1\|_{L^p(|\lambda| \geq b)}.
\end{align*}
From Lemma \ref{lemdifh} we obtain
\begin{equation}
\|h_2 - h_1\|_{L^p(|\lambda|<b)} \leq c(D,N,p)\delta b^{1/p}e^{2l b},
\end{equation}
and from \eqref{esthg}
\begin{equation}
\|h_2 - h_1\|_{L^p(|\lambda| \geq b)} \leq c(N,p,m) \frac{1}{b^{m-2/p}}.
\end{equation}
Define $b = \beta \log(3 + \delta^{-1})$ for $0< \beta< 1/(2l)$. Let $\bar \delta < 1$ such that for $\delta \leq \bar \delta$ we have that $b > R$, where $R$ is defined in Lemma \ref{lemh}.

Then we have, for $\delta \leq \bar \delta$,
\begin{align*}
&\|h_2 - h_1\|_{L^p(\C)} \leq c(D,N,m,p)\delta (1+\delta^{-1})^{2l \beta} (\beta \log(3+\delta^{-1}))^{1/p}\\
&\qquad + c(N,m,p)(\log(3+\delta^{-1}))^{-(m-2/p)}.
\end{align*}
Since $2l\beta < 1$, we have that
\begin{align*}
\delta (1+\delta^{-1})^{2l \beta} (\beta \log(3+\delta^{-1}))^{1/p} \to 0 \quad \text{for } \delta \to 0
\end{align*}
more rapidly than the other term. Thus
\begin{equation} \label{bla}
\|h_2 - h_1\|_{L^p(\C)} \leq c(D,N,m,p,\beta)(\log(3+\delta^{-1}))^{-(m-2/p)},
\end{equation}
for $\delta \leq \bar \delta$, $0<\beta < 1/(2l)$.

Estimate \eqref{bla} for general $\delta$ (with modified constant) follows from \eqref{bla} for $\delta \leq \bar \delta$ and the $L^p$-boundedness of the scattering amplitude (this because it is continuous and decays at infinity like in Lemma \ref{lemesth}). This completes the proof of Proposition \ref{prophdir2}.
\end{proof}

\section{Estimates on the Faddeev functions} \label{sechv}
\begin{lem} \label{lemestmu}
Let $v_1, v_2$ be two potentials satisfying \eqref{direig}, \eqref{cv1}, \eqref{cv2}, with $\|v_j\|_{m,1} \leq N$, $h_1, h_2$ the corresponding scattering amplitude and $\mu_1(z,\lambda), \mu_2(z,\lambda)$ the corresponding Faddeev functions. Let $1 < s < 2$, and $\tilde s$ be as in \eqref{deftilde}. Then
\begin{align} \label{estmula1}
\sup_{z \in \C} \|\mu_2(z,\cdot) - \mu_1(z,\cdot) \|_{L^{\tilde s}(\C)} &\leq c(D,N,s) \left\| \frac{h_2(\lambda) - h_1(\lambda)}{\bar \lambda}\right\|_{L^s(\C)},\\ \label{estmula2}
\sup_{z \in \C} \left\|\frac{\partial \mu_2(z,\cdot)}{\partial z} - \frac{\partial \mu_1(z,\cdot)}{\partial z} \right\|_{L^{\tilde s}(\C)}\! \! \! \! \! \! &\leq c(D,N,s)\Bigg[ \left\| \frac{h_2(\lambda) - h_1(\lambda)}{\bar \lambda}\right\|_{L^s(\C)} \\ \nonumber
&\quad  +\|h_2 - h_1\|_{L^s(\C)} \Bigg]
\end{align}
\end{lem}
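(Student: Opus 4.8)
The plan is to turn each difference into a scalar generalised-analytic (i.e.\ $\bar\partial_\lambda$) equation of the form handled by Lemma~\ref{lemtech}, and to read the bound off that lemma. For \eqref{estmula1}, set $\nu=\mu_2-\mu_1$. Subtracting the two copies of \eqref{dbar} and inserting the telescoping identity $h_2\overline{\mu_2}-h_1\overline{\mu_1}=h_2\overline{\nu}+(h_2-h_1)\overline{\mu_1}$ gives
\[
\frac{\partial\nu}{\partial\bar\lambda}
=\frac{h_2\,e_{-\lambda}}{4\pi\bar\lambda}\,\overline{\nu}
+\frac{(h_2-h_1)\,e_{-\lambda}}{4\pi\bar\lambda}\,\overline{\mu_1},
\]
which is exactly $\partial_{\bar\lambda}\nu=a\overline{\nu}+b$ with $a=h_2e_{-\lambda}/(4\pi\bar\lambda)$. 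Since $|e_{-\lambda}|=1$ and $\tilde p>2$, I may pick $1<s_1<2<s_2<\infty$ inside $(\tilde p',\tilde p)$, so that $\|a\|_{L^{s_1}}+\|a\|_{L^{s_2}}\le c(N)$ by \eqref{prophlp}; this freezes the exponential factor of Lemma~\ref{lemtech} to a constant $c(D,N)$. For the source, \eqref{normmu} with $r=\infty$ gives $\|\mu_1(z,\cdot)\|_{L^\infty(\C)}\le 1+c(D,N)$ uniformly in $z$, whence $\|b\|_{L^s}\le c(D,N)\,\|(h_2-h_1)/\bar\lambda\|_{L^s}$. Lemma~\ref{lemtech} then yields \eqref{estmula1}; the qualitative membership $\nu(z,\cdot)\in L^{\tilde s}(\C)$ that the lemma presupposes follows from \eqref{normmu} after fixing the auxiliary parameter $p$ of Proposition~\ref{mainprop} close enough to $2$ that $p'<\tilde s$.

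For \eqref{estmula2} I differentiate \eqref{dbar} in $z$. Using $\partial_z e_{-\lambda}=-i\lambda\,e_{-\lambda}$ and $\partial_z\overline{\mu}=\overline{\partial_{\bar z}\mu}$, the function $P_j=\partial_z\mu_j$ satisfies $\partial_{\bar\lambda}P_j=-i(\lambda/\bar\lambda)(h_je_{-\lambda}/4\pi)\overline{\mu_j}+(h_je_{-\lambda}/4\pi\bar\lambda)\overline{\partial_{\bar z}\mu_j}$. The \emph{main obstacle} is the last term: it carries the ``wrong'' derivative $\partial_{\bar z}\mu_j$, so differentiating in $z$ does not produce a closed equation for $P_j$. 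To get around this I also differentiate \eqref{dbar} in $\bar z$ (now $\partial_{\bar z}e_{-\lambda}=-i\bar\lambda\,e_{-\lambda}$), which gives the companion equation $\partial_{\bar\lambda}Q_j=-i(h_je_{-\lambda}/4\pi)\overline{\mu_j}+(h_je_{-\lambda}/4\pi\bar\lambda)\overline{\partial_z\mu_j}$ for $Q_j=\partial_{\bar z}\mu_j$. Thus $(P_j,Q_j)$ solves a coupled $\bar\partial_\lambda$ system in which the coupling coefficient is the \emph{same} $a=h_2e_{-\lambda}/(4\pi\bar\lambda)$ as above (after forming differences and telescoping).

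The key point is that this system decouples: writing $\mathcal S=(P_2-P_1)+(Q_2-Q_1)$ and $\mathcal D=(P_2-P_1)-(Q_2-Q_1)$ and telescoping each product $h_2(\cdot)_2-h_1(\cdot)_1$ as before, I obtain two scalar equations $\partial_{\bar\lambda}\mathcal S=a\overline{\mathcal S}+b_{\mathcal S}$ and $\partial_{\bar\lambda}\mathcal D=-a\overline{\mathcal D}+b_{\mathcal D}$, to each of which Lemma~\ref{lemtech} applies with the coefficient bound already obtained. In the sources $b_{\mathcal S},b_{\mathcal D}$ the quantity $\nu$ is already controlled by \eqref{estmula1}, and I split the contributions by H\"older into three types: terms $h_2\overline{\nu}$, bounded by $\|h_2\|_{L^2}\|\nu\|_{L^{\tilde s}}$ (here $h_2\in L^2(\C)$ by Lemma~\ref{lemh} together with \eqref{esthnear}), which contribute $\|(h_2-h_1)/\bar\lambda\|_{L^s}$; terms $(h_2-h_1)\overline{\mu_1}$, bounded by $\|h_2-h_1\|_{L^s}\|\mu_1\|_{L^\infty}$, which contribute the second summand of \eqref{estmula2}; and terms $(h_2-h_1)(\overline{P_1}+\overline{Q_1})/\bar\lambda$, bounded via $\|(h_2-h_1)/\bar\lambda\|_{L^s}$ times a priori $L^\infty(\C_\lambda)$-bounds, uniform in $z$, on $\partial_z\mu_1$ and $\partial_{\bar z}\mu_1$. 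Adding the two resulting estimates recovers $2(P_2-P_1)$ and gives \eqref{estmula2}. Besides the decoupling, the delicate point I expect is precisely securing those auxiliary $L^\infty(\C_\lambda)$-bounds on the first $z$-derivatives of $\mu_j$ (analogues of \eqref{normmu}, provable by differentiating the integral equation for $\mu$ and using the regularity of $v$), since the $L^\infty$ exponent is what makes the final norms match.
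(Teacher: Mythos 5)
Your proposal is correct and follows essentially the same route as the paper: the same telescoped $\bar\partial_\lambda$-equation for $\nu=\mu_2-\mu_1$ handled by Lemma~\ref{lemtech}, and the same decoupling of the differentiated system via the sum and difference $\partial_z\mu\pm\partial_{\bar z}\mu$ (your $\mathcal S,\mathcal D$ are exactly the paper's $\tau_\pm$), with the identical H\"older splitting of the source terms, the only immaterial difference being which index carries the coefficient versus the source in the telescoping. The one point you flag as delicate --- the uniform-in-$z$ bounds $\partial_z\mu_j(z,\cdot),\partial_{\bar z}\mu_j(z,\cdot)\in L^{\tilde s}(\C)\cap L^{\infty}(\C)$ --- is precisely what the paper settles by citing \cite[Lemma 2.1]{N4} together with the regularity assumptions on the potentials, in line with the route you sketch.
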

\begin{proof}
We begin with the proof of \eqref{estmula1}. Let 
\begin{align} \label{defnu}
\nu(z,\lambda) &= \mu_2(z,\lambda) - \mu_1(z,\lambda).
\end{align}

From the $\bar \partial$-equation \eqref{dbar} we deduce that $\nu$ satisfies the following non-homogeneous $\bar \partial$-equation:
\begin{align} \label{dbar1}
\frac{\partial}{\partial \bar \lambda}\nu(z,\lambda) = \frac{e_{-\lambda}(z)}{4\pi}\left(\frac{h_1(\lambda)}{\bar \lambda}\overline{\nu(z,\lambda)}+ \frac{h_2(\lambda)-h_1(\lambda)}{\bar \lambda}\overline{\mu_2(z,\lambda)}\right),
\end{align}
for $\lambda \in \C$, where $e_{-\lambda}(z)$ is defined in \eqref{defexp}. Note that since, by Sobolev embedding, $v \in L^{\infty}(D)\subset L^s(D)$, we have that $\nu(z,\cdot) \in L^{\tilde s}(\C)$ for every $\tilde s > 2$ (see \eqref{normmu}). In addition, from Proposition \ref{mainprop} (see \eqref{prophlp}) we have that $h(\lambda) / \bar \lambda \in L^p(\C)$, for $ 1 < p < \infty$. Then it is possible to use Lemma \ref{lemtech} in order to obtain
\begin{align*}
\|\nu(z,\cdot)\|_{L^{\tilde s}} &\leq c(D,N,s)\left\|\overline{\mu_2(z,\lambda)} \frac{h_2(\lambda) - h_1(\lambda)}{\bar \lambda}\right\|_{L^s(\C)}\\
&\leq c(D,N,s) \sup_{z \in \C}\|\mu_2(z,\cdot)\|_{L^{\infty}} \left\| \frac{h_2(\lambda) - h_1(\lambda)}{\bar \lambda}\right\|_{L^s(\C)}\\
&\leq c(D,N,s) \left\| \frac{h_2(\lambda) - h_1(\lambda)}{\bar \lambda}\right\|_{L^s(\C)},
\end{align*}
where we used again the property \eqref{normmu} of $\mu_2(z,\lambda)$.

Now we pass to \eqref{estmula2}. To simplify notations we write, for $z,\lambda \in \C$,
\begin{equation} \nonumber
\mu_z^j(z,\lambda) = \frac{\partial \mu_j(z,\lambda)}{\partial z}, \quad \mu_{\bar z}^j(z,\lambda) = \frac{\partial \mu_j(z,\lambda)}{\partial \bar z},\quad j=1,2.
\end{equation}

From the $\bar \partial$-equation \eqref{dbar} we have that $\mu_z^j$ and $\mu_{\bar z}^j$ satisfy the following system of non-homogeneous $\bar \partial$-equations, for $j=1,2$:
\begin{align*}
\frac{\partial}{\partial \bar \lambda}\mu_z^j(z,\lambda) = \frac{e_{-\lambda}(z)}{4\pi}\frac{h_j(\lambda)}{\bar \lambda}\left(\overline{\mu_{\bar z}^j(z,\lambda)}-i\lambda \overline{\mu_j(z,\lambda)}\right),\\
\frac{\partial}{\partial \bar \lambda}\mu_{\bar z}^j(z,\lambda) = \frac{e_{-\lambda}(z)}{4\pi}\frac{h_j(\lambda)}{\bar \lambda}\left(\overline{\mu_{z}^j(z,\lambda)}-i\bar \lambda \overline{\mu_j(z,\lambda)}\right).
\end{align*}
Define now $\mu_{\pm}^j(z,\lambda) = \mu_z^j(z,\lambda) \pm \mu_{\bar z}^j(z,\lambda)$, for $j=1,2$. 
Then they satisfy the following two non-homogeneous $\bar \partial$-equations:
\begin{equation} \nonumber
\frac{\partial}{\partial \bar \lambda}\mu_{\pm}^j(z,\lambda)= \pm \frac{e_{-\lambda}(z)}{4\pi}\frac{h_j(\lambda)}{\bar \lambda}\left( \overline{\mu_{\pm}^j(z,\lambda)}\mp i(\lambda \pm \bar \lambda) \overline{\mu_j(z,\lambda)}\right).
\end{equation}
Finally define $\tau_{\pm}(z,\lambda) = \mu_{\pm}^2(z,\lambda)- \mu_{\pm}^1(z,\lambda)$. They satisfy the two non-homogeneous $\bar \partial$-equations below:
\begin{align*}
\frac{\partial}{\partial \bar \lambda}\tau_{\pm}(z,\lambda) &= \pm \frac{e_{-\lambda}(z)}{4\pi}\bigg[\frac{h_1(\lambda)}{\bar \lambda}\overline{\tau_{\pm}(z,\lambda)}+ \frac{h_2(\lambda)-h_1(\lambda)}{\bar \lambda}\overline{\mu_{\pm}^2(z,\lambda)}\\ \nonumber
&\quad \mp i \frac{\lambda \pm \bar \lambda}{\bar \lambda}\left( \left(h_2(\lambda)-h_1(\lambda)\right)\overline{\mu_2(z,\lambda)}+h_1(\lambda)\overline{\nu(z,\lambda)}\right) \bigg],
\end{align*}
where $\nu(z,\lambda)$ was defined in \eqref{defnu}.

Now remark that by \cite[Lemma 2.1]{N4} and regularity assumptions on the potentials we have that $\mu_z^j(z,\cdot), \mu_{\bar z}^j(z,\cdot) \in L^{\tilde s}(\C) \cap L^{\infty}(\C)$ for any $\tilde s > 2$, $j=1,2$. This, in particular, yields $\tau_{\pm}(z,\cdot) \in L^{\tilde s}(\C)$. These arguments, along with the above remarks on the $L^p$ boundedness of $h_j(\lambda) / \bar \lambda$, make possible to use Lemma \ref{lemtech}, which gives
\begin{align*}
\|\tau_{\pm}(z,\cdot)\|_{L^{\tilde s}(\C)}&\leq c(D,N,s)\Bigg[\left\|\frac{h_2(\lambda)-h_1(\lambda)}{\bar \lambda} \overline{\mu_{\pm}^2(z,\cdot)}\right\|_{L^s(\C)}\\
&\quad +\|(h_2(\cdot)-h_1(\cdot))\overline{\mu_2(z,\cdot)}\|_{L^s(\C)}+\|h_1(\cdot) \overline{\nu(z,\cdot)}\|_{L^s(\C)}\Bigg]\\
&\leq c(D,N,s)\Bigg[\left\|\frac{h_2(\lambda)-h_1(\lambda)}{\bar \lambda}\right\|_{L^s(\C)}+\|h_2-h_1\|_{L^s(\C)}\\
&\quad + \|h_1\|_{L^2(\C)}\|\nu(z,\cdot)\|_{L^{\tilde s}(\C)}\Bigg]\\
&\leq c(D,N,s)\Bigg[ \left\|\frac{h_2(\lambda)-h_1(\lambda)}{\bar \lambda}\right\|_{L^s(\C)}+\|h_2-h_1\|_{L^s(\C)}\Bigg],
\end{align*}
where we used H\"older's inequality (since $1/s = 1/2 + 1/ \tilde s$) and estimate \eqref{estmula1}.
The proof of \eqref{estmula2} now follows from this last inequality and the fact that $\mu^2_z-\mu^1_z = \frac{1}{2}(\tau_+-\tau_-)$.
\end{proof}
\begin{rem}
We also have proved that
\begin{align*}
\sup_{z \in \C} \left\|\frac{\partial \mu_2(z,\cdot)}{\partial \bar z} - \frac{\partial \mu_1(z,\cdot)}{\partial \bar z} \right\|_{L^{\tilde s}(\C)}\! \! \! \! \! \! &\leq c(D,N,s)\Bigg[ \left\| \frac{h_2(\lambda) - h_1(\lambda)}{\bar \lambda}\right\|_{L^s(\C)} \\ \nonumber
&\quad  +\|h_2 - h_1\|_{L^s(\C)} \Bigg].
\end{align*}
\end{rem}

We will need the following consequence of Lemma \ref{lemestmu}.
\begin{lem} \label{prophv}
Let $v_1, v_2$ be two potentials satisfying \eqref{direig}, \eqref{cv1}, \eqref{cv2}, with $\|v_j\|_{m,1} \leq N$. Let $h_1, h_2$ be the corresponding scattering amplitude and $\mu_1(z,\lambda), \mu_2(z,\lambda)$ the corresponding Faddeev functions. Let $p,p'$ such that $1<p<2<p'<\infty$, $1/p+1/p' =1$. Then
\begin{equation} \label{est23}
\|\mu_2(\cdot,0)-\mu_1(\cdot,0)\|_{L^{\infty}(D)} \leq c(D,N,p) \left\|\frac{h_2(\lambda) - h_1(\lambda)}{\bar \lambda}\right\|_{L^p(\C)\cap L^{p'}(\C)}.
\end{equation}
\end{lem}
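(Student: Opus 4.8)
The plan is to recover $\nu(z,0):=\mu_2(z,0)-\mu_1(z,0)$ from the inhomogeneous $\bar\partial$-equation \eqref{dbar1} it satisfies and then to estimate the two resulting integrals separately. Since $\mu_j(z,\cdot)-1$ decays at infinity in $L^{\tilde s}$ by \eqref{normmu}, so does $\nu(z,\cdot)$, and the generalised Cauchy (Pompeiu) formula for a function vanishing at infinity gives, uniformly in $z$,
\[
\nu(z,\lambda)=\frac{1}{\pi}\int_{\C}\frac{1}{\lambda-\zeta}\,\frac{\partial \nu}{\partial \bar\zeta}(z,\zeta)\,d\Ree\zeta\,d\Imm\zeta .
\]
Evaluating at $\lambda=0$ and inserting the right-hand side of \eqref{dbar1} yields
\[
\nu(z,0)=-\frac{1}{4\pi^2}\int_{\C}\frac{e_{-\zeta}(z)}{\zeta}\left(\frac{h_1(\zeta)}{\bar\zeta}\overline{\nu(z,\zeta)}+\frac{h_2(\zeta)-h_1(\zeta)}{\bar\zeta}\overline{\mu_2(z,\zeta)}\right)d\Ree\zeta\,d\Imm\zeta=:I_1(z)+I_2(z).
\]
Because $|e_{-\zeta}(z)|=1$, each term is dominated by the integral of the modulus of its integrand, and the whole estimate will be uniform in $z$, so restricting to $z\in D$ at the end produces the $L^\infty(D)$ norm.

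First I would bound the source term $I_2$. By \eqref{normmu} with $r=\infty$ the family $\mu_2(z,\cdot)$ is bounded uniformly in $z$, whence $|I_2(z)|\le c\int_{\C}|\zeta|^{-1}\,|(h_2-h_1)/\bar\zeta|\,d\Ree\zeta\,d\Imm\zeta$. The kernel $|\zeta|^{-1}$ lies in no single $L^r(\C)$, so I split the integral over $|\zeta|<1$ and $|\zeta|>1$: on the first region $|\zeta|^{-1}\in L^{p}$ and I pair it with $(h_2-h_1)/\bar\zeta\in L^{p'}$, while on the second $|\zeta|^{-1}\in L^{p'}$ paired with $(h_2-h_1)/\bar\zeta\in L^{p}$ (recall $p<2<p'$). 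Hölder's inequality then gives $|I_2(z)|\le c(D,N,p)\,\|(h_2-h_1)/\bar\zeta\|_{L^p\cap L^{p'}}$, which is exactly the asserted bound for this term.

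The term $I_1$ is treated in the same spirit, with the extra factor $\nu$. Here $h_1/\bar\zeta\in L^r(\C)$ for every $r$ in an open interval containing $2$ by \eqref{prophlp}, while Lemma \ref{lemestmu} furnishes $\sup_z\|\nu(z,\cdot)\|_{L^{\tilde s}}\le c(D,N,s)\,\|(h_2-h_1)/\bar\zeta\|_{L^s}$ for any $1<s<2$. Since $(h_2-h_1)/\bar\zeta\in L^p\cap L^{p'}$ it lies in $L^s$ for every $s\in[p,p']$ by interpolation, with $\|(h_2-h_1)/\bar\zeta\|_{L^s}\le \|(h_2-h_1)/\bar\zeta\|_{L^p\cap L^{p'}}$, so the $\nu$-factor is controlled by the right-hand side of \eqref{est23}. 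Splitting the kernel at $|\zeta|=1$ once more and applying the three-factor Hölder inequality to $|\zeta|^{-1}$, $h_1/\bar\zeta$ and $\nu(z,\cdot)$, I choose the exponents so that $|\zeta|^{-1}\in L^{a}(|\zeta|<1)$ with $a<2$ on the inner region and $|\zeta|^{-1}\in L^{a}(|\zeta|>1)$ with $a>2$ on the outer one, the remaining two exponents taken inside the admissible window of $h_1/\bar\zeta$ around $2$ and equal to the large exponent $\tilde s$ for $\nu$. This bounds $|I_1(z)|$ by $c(D,N,p)\,\|(h_2-h_1)/\bar\zeta\|_{L^p\cap L^{p'}}$ as well; adding the two estimates gives \eqref{est23}.

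The one point requiring care is the closure of the Hölder exponents in $I_1$: the Cauchy kernel $1/\zeta$ evaluated at $\lambda=0$ forces the split into near-zero and near-infinity regions, and on each region the three exponents must simultaneously respect the kernel constraint ($a<2$, resp. $a>2$), the integrability window of $h_1/\bar\lambda$, and the range $\tilde s>2$ for $\nu$. These three demands are compatible precisely because the window of $h_1/\bar\lambda$ is an open neighbourhood of $2$ and $\tilde s$ (equivalently, a suitable $s\in(p,2)$ in Lemma \ref{lemestmu}) may be chosen as large as needed; verifying this compatibility is the main, and essentially the only, nontrivial step.
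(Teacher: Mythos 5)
Your proof is correct and is essentially the paper's own argument: both express $\nu(z,0)$ through the Cauchy--Pompeiu formula applied to the $\bar\partial$-equation \eqref{dbar1}, bound the $\overline{\mu_2}$ source term via the uniform bound \eqref{normmu} together with a split of the kernel $1/\lambda$ at a finite radius (pairing $L^p$ with $L^{p'}$ exactly as in \eqref{est333}), and control the $\overline{\nu}$ term via \eqref{prophlp} and Lemma \ref{lemestmu}. Your only departures are cosmetic: you use a single three-factor H\"older inequality where the paper nests two two-factor ones, and you pass from $L^p\cap L^{p'}$ to the intermediate $L^s$ norm by interpolation (choosing $\tilde s$ large so that $s\in(p,2)$) where the paper instead takes the conjugate pair in the final estimate to be $(s,s')$ directly.
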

\begin{proof}
We recall again that if $v \in W^{m,1}(\R^2)$, $m >2$, with $\supp v \subset D$ then $v \in L^p(D)$ for $p \in [1,\infty]$; in particular, from Proposition \ref{mainprop}, this yields $h(\lambda) / \bar \lambda \in L^p(\C)$, for $ 1 < p < \infty$.

We write, as in the preceding proof,
\begin{align} \label{est330}
\nu(z,\lambda) &= \mu_2(z,\lambda) - \mu_1(z,\lambda),
\end{align}
which satisfy the non-homogeneous $\bar \partial$-equations \eqref{dbar1}. From this equation we obtain
\begin{align} \label{est331}
|\nu(z,0)| &= \frac{1}{\pi}\left| \int_{\C}\frac{e_{-\lambda}(z)}{4 \pi \lambda}\frac{h_1(\lambda)}{\bar \lambda}\overline{\nu(z,\lambda)}d \Ree \lambda \, d \Imm \lambda \right.\\ \nonumber
&\quad \left. + \int_{\C}\frac{e_{-\lambda}(z)}{4 \pi \lambda}\frac{h_2(\lambda)-h_1(\lambda)}{\bar \lambda}\overline{\mu_2(z,\lambda)}d \Ree \lambda \, d \Imm \lambda \right|\\\nonumber
&\leq \frac{1}{4 \pi^2}\sup_{z \in \C}\|\nu(z,\cdot)\|_{L^r} \left\|\frac{h_1(\lambda)}{\lambda \bar \lambda}\right\|_{L^{r'}}\\ \nonumber
&\quad +\frac{1}{4 \pi^2}\sup_{z \in \C}\|\mu_2(z,\cdot)\|_{L^{\infty}}\left\|\frac{h_2(\lambda) -h_1(\lambda)}{\lambda \bar \lambda}\right\|_{L^{1}}
\end{align}
where $1/r + 1/r' =1$, $1<r' <2<r <\infty$. The number $s = 2 r/ ( r+2)$ can be chosen $s < 2$ and as close to $2$ as wanted, by taking $r$ big enough.

Then
\begin{align}
\left\|\frac{h_1(\lambda)}{\lambda \bar \lambda}\right\|_{L^{r'}(|\lambda| < R)}\leq \left\|\frac{h_1(\lambda)}{\bar \lambda}\right\|_{L^{p}} \left\|\frac{1}{\lambda }\right\|_{L^{q}(|\lambda| < R)} \leq c(N,r),
\end{align}
where we have chosen $p >2$ such that $\left\| h_1(\lambda)/{\bar \lambda}\right\|_{L^{p}} \leq c(N,p)$ from \eqref{prophlp} and also, since $1/q = 1/r'-1/p = 1-1/r-1/p$, $q$ can be chosen less than $2$ by taking $r$ big enough depending on $p$. With the same choice of $p,q$ we also obtain
\begin{align}
\left\|\frac{h_1(\lambda)}{\lambda \bar \lambda}\right\|_{L^{r'}(|\lambda| > R)}\leq \left\|\frac{h_1(\lambda)}{\bar \lambda}\right\|_{L^{q}} \left\|\frac{1}{\lambda }\right\|_{L^{p}(|\lambda| > R)} \leq c(N,r).
\end{align}
From Lemma \ref{lemestmu} with $r = \tilde s$ we get
\begin{align}
\sup_{z \in \C}\|\nu(z,\cdot)\|_{L^r} &\leq c(D,N,r) \left\| \frac{h_2(\lambda) - h_1(\lambda)}{\bar \lambda}\right\|_{L^s(\C)},
\end{align}
and from \eqref{normmu}
\begin{equation}
\sup_{z,\lambda \in \C} |\mu_2(z,\lambda)| \leq c(D,N).
\end{equation}
Finally
\begin{align} \label{est333}
\left\|\frac{h_2(\lambda) -h_1(\lambda)}{\lambda \bar \lambda}\right\|_{L^{1}} &\leq \left\|\frac{1}{\lambda}\right\|_{L^{p}(|\lambda| > R)}\left\|\frac{h_2(\lambda) -h_1(\lambda)}{\bar \lambda}\right\|_{L^{p'}}\\ \nonumber
&\quad +\left\|\frac{1}{\lambda}\right\|_{L^{p'}(|\lambda| < R)}\left\|\frac{h_2(\lambda) -h_1(\lambda)}{\bar \lambda}\right\|_{L^{p}},
\end{align}
by taking $p' = s$ and $p$ such that $1/p + 1/p' =1$.
Now \eqref{est23} follow from \eqref{est330}--\eqref{est333}; this finishes the proof of Lemma \ref{prophv}.
\end{proof}

\section{Proof of Theorems \ref{maintheo} and \ref{coro}} \label{secpf}
\begin{proof}[Proof of Theorem \ref{maintheo}]
We begin with a remark, which take inspiration from Problem 1 at non-zero energy (see, for instance, \cite{N5}).

Let $v(z)$ be a potential which satisfies the hypothesis of Theorem \ref{maintheo} and $\mu(z,\lambda)$ the corresponding Faddeev functions. Since $\mu(z,\lambda)$ satisfies \eqref{normmu}, the $\bar \partial$-equation \eqref{dbar} and $h(\lambda)$ decreases at infinity like in Lemma \ref{lemh}, it is possible to write the following development:
\begin{equation} \label{devel}
\mu(z,\lambda) = 1 + \frac{\mu_{-1}(z)}{\lambda}+O\left(\frac{1}{|\lambda|^2}\right), \qquad \lambda \to \infty,
\end{equation}
for some function $\mu_{-1}(z)$. If we insert \eqref{devel} into equation \eqref{equa}, for $\psi(z,\lambda) = e^{iz\lambda} \mu(z,\lambda)$, we obtain, letting $\lambda \to \infty$,
\begin{equation}
v(z) = 4 i \frac{\partial \mu_{-1}(z)}{\partial \bar z}, \qquad z \in \C.
\end{equation}
We can write this in a more explicit form, using the following integral equation (a consequence of \eqref{dbar}):
\begin{equation} \nonumber
\mu(z,\lambda)-1=\frac{1}{8\pi^2 i}\int_{\C}\frac{h(\lambda')}{(\lambda'-\lambda)\bar \lambda'}e_{-\lambda'}(z)\overline{\mu(z,\lambda')}d\lambda' \, d \bar \lambda'.
\end{equation}
By Lebesgue's dominated convergence (using \eqref{esthnear}) we obtain
\begin{equation}\nonumber
\mu_{-1}(z)=-\frac{1}{8\pi^2 i}\int_{\C}\frac{h(\lambda)}{\bar \lambda}e_{-\lambda}(z)\overline{\mu(z,\lambda)}d\lambda \, d \bar \lambda,
\end{equation}
and the explicit formula
\begin{equation} \label{expl}
v(z) = \frac{1}{2\pi^2}\int_{\C}e_{-\lambda}(z)\left(ih(\lambda)\overline{\mu(z,\lambda)}-\frac{h(\lambda)}{\bar \lambda}\overline{\left(\frac{\partial \mu(z,\lambda)}{\partial z}\right)}\right)d\lambda\, d\bar \lambda.
\end{equation}
Formula \eqref{expl} for $v_1$ and $v_2$ yields
\begin{align*}
v_2(z)-v_1(z)&=\frac{1}{2\pi^2}\int_{\C}e_{-\lambda}(z)\Bigg[i(h_2(\lambda)-h_1(\lambda))\overline{\mu_2(z,\lambda)}\\
&\quad +ih_1(\lambda)(\overline{\mu_2(z,\lambda)}-\overline{\mu_1(z,\lambda)})\\
&\quad-\frac{h_2(\lambda)-h_1(\lambda)}{\bar \lambda}\overline{\left(\frac{\partial \mu_2(z,\lambda)}{\partial z}\right)}\\
&\quad-\frac{h_1(\lambda)}{\bar \lambda}\overline{\left(\frac{\partial \mu_2(z,\lambda)}{\partial z} -\frac{\partial \mu_1(z,\lambda)}{\partial z}\right)}\Bigg]d\lambda\, d \bar \lambda.
\end{align*}
Then, using several times H\"older's inequality, we find
\begin{align*}
|v_2(z)-v_1(z)|&\leq \frac{1}{2\pi^2}\Bigg(\|\mu_2(z,\cdot)\|_{L^{\infty}}\|h_2-h_1\|_{L^1}\\
&\quad+ \|h_1\|_{L^{\tilde p'}}\|\mu_2(z,\cdot)-\mu_1(z,\cdot)\|_{L^{\tilde p}}\\
&\quad+ \left\|\frac{h_2(\lambda)-h_1(\lambda)}{\bar \lambda}\right\|_{L^p}\left\|\frac{\partial \mu_2(z,\cdot)}{\partial z}\right\|_{L^{p'}}\\
&\quad+\left\|\frac{h_1(\lambda)}{\bar \lambda}\right\|_{L^{\tilde p'}}\left\|\frac{\partial \mu_2(z,\cdot)}{\partial z}-\frac{\partial \mu_1(z,\cdot)}{\partial z}\right\|_{L^{\tilde p}}\Bigg),
\end{align*}
for $1<p<2$, $\tilde p$ defined as in \eqref{deftilde} and $1/p + 1/p' = 1/{\tilde p} + 1/{\tilde p'} = 1$. From \eqref{normmu}, \eqref{prophlp}, the continuity of $h_j$ and Lemma \ref{lemh}, \cite[Lemma 2.1]{N4} (see the end of the proof of Lemma \ref{lemestmu} for more details), Lemma \ref{lemestmu}, Propositions \ref{prophdir2} and \ref{prophdir} we finally obtain
\begin{align*}
\|v_2-v_1\|_{L^{\infty}(D)} &\leq c(D,N,m,p)\bigg( \log(3 + \|\Phi_2 - \Phi_1\|_{H^{1/2}\to H^{-1/2}}^{-1})^{-(m-2)}\\
&\quad + \log(3 + \|\Phi_2 - \Phi_1\|_{H^{1/2}\to H^{-1/2}}^{-1})^{-(m+1-2/p)}\\
&\quad + \log(3 + \|\Phi_2 - \Phi_1\|_{H^{1/2}\to H^{-1/2}}^{-1})^{-(m-2/p)}\bigg)\\
&\leq c(D,N,m,p)\log(3 + \|\Phi_2 - \Phi_1\|_{H^{1/2}\to H^{-1/2}}^{-1})^{-(m-2)}.
\end{align*}
This finishes the proof of Theorem \ref{maintheo}.
\end{proof}

\begin{proof}[Proof of Theorem \ref{coro}] We first extend $\sigma$ on the whole plane by putting $\sigma (x) =1$ for $x \in \R^2 \setminus D$ (this extension is smooth by our hypothesis on $\sigma$). Now since $\sigma_j|_{\partial D}=1$ and $\frac{\partial \sigma_j}{\partial \nu}|_{\partial D} = 0$ for $j=1,2$, from \eqref{opchange} we deduce that
\begin{equation} \label{direq}
\Phi_j = \Lambda_j, \qquad j=1,2.
\end{equation}
In addition, from \eqref{estmunear} we get
\begin{equation}
\lim_{\lambda \to 0} \mu_j(z,\lambda) = \sigma_j^{1/2}(z), \qquad j=1,2;
\end{equation}
thus we obtain, using the fact that $\sigma_j$ is bounded from above and below, for $j=1,2$,
\begin{align} \label{ciao}
\|\sigma_2 - \sigma_1\|_{L^{\infty}(D)} &\leq c(N)\|\sigma^{1/2}_2 - \sigma^{1/2}_1\|_{L^{\infty}(D)}\\ \nonumber
&= c(N)\|\mu_2(\cdot,0)-\mu_1(\cdot,0)\|_{L^{\infty}(D)}.
\end{align}

Now fix $\alpha < m$ and take $p$ such that $$\max\left(1,\frac{2}{m-\alpha + 1}\right)<p < 2.$$
From Lemma \ref{prophv} we have
\begin{equation}
\|\mu_2(\cdot,0)-\mu_1(\cdot,0)\|_{L^{\infty}(D)} \leq c(D,N,p)\left\|\frac{h_2(\lambda) - h_1(\lambda)}{\bar \lambda}\right\|_{L^p(\C)\cap L^{p'}(\C)},
\end{equation}
where $1/p+1/p'=1$. From Proposition \ref{prophdir}
\begin{align*}
\left\|\frac{h_2(\lambda) - h_1(\lambda)}{\bar \lambda}\right\|_{L^p(\C)\cap L^{p'}(\C)} \! \! \! \! \! \! \! \! \! \! \! \! &\leq c(D,N,p)\log(3 + \|\Phi_2 - \Phi_1\|_{H^{1/2}\to H^{-1/2}}^{-1})^{-(m+1-2/p)}\\
&\leq c(D,N,p)\log(3 + \|\Phi_2 - \Phi_1\|_{H^{1/2}\to H^{-1/2}}^{-1})^{-\alpha}\\
&= c(D,N,p)\log(3 + \|\Lambda_2 - \Lambda_1\|_{H^{1/2}\to H^{-1/2}}^{-1})^{-\alpha},
\end{align*}
from \eqref{direq} and since $\alpha < m+1 - \frac{2}{p}$. Theorem \ref{coro} is thus proved.
\end{proof}

\end{document}